\documentclass[onefignum,onetabnum]{siamart190516}
\nonstopmode

% Packages and macros go here
%\usepackage{lipsum}
%\usepackage{amsfonts}
%\usepackage{graphicx}
%\usepackage{epstopdf}
%\usepackage{algorithmic}
\usepackage{pgfplots}
\pgfplotsset{compat=1.15}
\usepackage{ifpdf} 
\makeatletter
\edef\texforht{TT\noexpand\fi
  \@ifpackageloaded{tex4ht}
    {\noexpand\iftrue}
    {\noexpand\iffalse}}
\makeatother

\usepackage{mathrsfs}
\usepackage{comment}
\usepackage{bm}
\usepackage{amsmath,amssymb}
\usepackage{doi}

\DeclareSymbolFont{EUR}{U}{eur}{m}{n}
\SetSymbolFont{EUR}{bold}{U}{eur}{b}{n}
\DeclareSymbolFontAlphabet{\eur}{EUR}

\newcommand{\scrM}{\mathscr{M}}

\newcommand{\jj}{\mathrm{i}}

\newcommand{\astomega}{\,\ast\sb{\!\!\sb\omega}\,}

\newcommand{\supp}{\mathop{\rm supp}}

\newcommand{\p}{\partial}
\newcommand{\at}[1]{\vert\sb{\sb{#1}}}

\def\R{\mathbb{R}}
\newcommand{\C}{\mathbb{C}}

\newcommand{\upsi}{u}

\newcommand{\N}{\mathbb{N}}

\newcommand{\abs}[1]{\vert #1 \vert}

\newcommand{\norm}[1]{\Vert #1 \Vert}
\newcommand{\sothat}{\,\,{\rm ;}\ \,}

\renewcommand{\aa}[1]{\mathrm{A}\sb{{#1}}}
\newcommand{\bb}[1]{\mathrm{B}\sb{{#1}}}
\renewcommand{\bb}[1]{\mathrm{B}\sb{{#1}}}

\renewcommand{\aa}[1]{\eur{A}\sb{{#1}}}
\renewcommand{\bb}[1]{\eur{B}\sb{{#1}}}

\ifpdf
  \DeclareGraphicsExtensions{.eps,.pdf,.png,.jpg}
\else
  \DeclareGraphicsExtensions{.eps}
\fi

% Add a serial/Oxford comma by default.

% Used for creating new theorem and remark environments
\newsiamremark{remark}{Remark}
\newsiamremark{hypothesis}{Hypothesis}
\crefname{hypothesis}{Hypothesis}{Hypotheses}
\newsiamthm{claim}{Claim}
\newsiamremark{example}{Example}
\newsiamremark{assumption}{Assumption}

% Sets running headers as well as PDF title and authors
\headers{On solutions with compact spectrum}{Andrew Comech}

% Title. If the supplement option is on, then "Supplementary Material"
% is automatically inserted before the title.
\title{On solutions with compact spectrum to nonlinear Klein--Gordon
 and Schr\"odinger equations\thanks{Submitted to the editors on 12 April 2021.
%% \funding{This work was supported by a grant
%% from the Simons Foundation/SFARI (851052, A.C.).}
}}

% Authors: full names plus addresses.
\author{Andrew Comech\thanks{Texas A\&M University, College Station, TX, USA;
%\and
%IITP, Moscow 101447, Russia
IITP, Moscow, Russia
  (\email{comech@sdf.org}).}
}

\usepackage{amsopn}

% Optional PDF information
\ifpdf
\hypersetup{
  pdftitle={On solutions with compact spectrum
      to nonlinear Klein--Gordon and Schr\"odinger equations},
  pdfauthor={Andrew Comech}
}
\fi

% The next statement enables references to information in the
% supplement. See the xr-hyperref package for details.

\externaldocument{ex_supplement}

% FundRef data to be entered by SIAM
%<funding-group specific-use="FundRef">
%<award-group>
%<funding-source>
%<named-content content-type="funder-name"> 
%</named-content> 
%<named-content content-type="funder-identifier"> 
%</named-content>
%</funding-source>
%<award-id> </award-id>
%</award-group>
%</funding-group>

\begin{document}

\maketitle

% REQUIRED
\begin{abstract}
We consider finite energy solutions
to the nonlinear Schr\"odinger equation
and nonlinear Klein--Gordon equation
and find the condition on the nonlinearity
so that
the standard, one-frequency solitary waves
are the only solutions with compact spectrum.
We also construct an example of a four-frequency
solitary wave solution
to the nonlinear Dirac equation
in three dimensions.
\end{abstract}

\begin{keywords}
Multifrequency solitary waves, compact spectrum, nonlinear Klein--Gordon equation, nonlinear Schr\"odinger equation, soliton resolution conjecture, Titchmarsh convolution theorem
\end{keywords}

\begin{AMS}
35B10, % Periodic solutions 
35B40, % Asymptotic behavior of solutions
35B41, % Attractors 
35Q41, % Time-dependent Schr\"odinger equations, Dirac equations 
37K40 % Soliton theory, asymptotic behavior of solutions
\end{AMS}

\section{Introduction}

This article contains an improvement of
the result obtained in \cite{partial-convolution}
on non-existence
of nontrivial solutions of compact spectrum
to nonlinear Schr\"odinger and Klein--Gordon equations.
We consider nonlinear Schr\"odinger and
nonlinear Klein--Gordon equations,
\begin{eqnarray}\label{eqns}
\jj\p_t\upsi
=
-\Delta\upsi+\alpha(\abs{\upsi}^2)\upsi,
\qquad
-\p_t^2\upsi
=
-\Delta\upsi+m^2\upsi+\alpha(\abs{\upsi}^2)\upsi,
\end{eqnarray}
with $\upsi(x,t)\in\C$, $x\in\R^n$,
$n\in\N$.
The nonlinearity
in \eqref{eqns}
is represented by a function
$\alpha\in C^1(\R,\R)$, $\alpha(0)=0$.
These $\mathbf{U}(1)$-invariant equations
are known to admit solitary wave solutions of the form
\begin{eqnarray}\label{solitary-waves}
\upsi(x,t)=\phi(x)e^{-\jj\omega t},
\qquad
\omega\in\R,
\end{eqnarray}
with $\phi(x)$ decaying at infinity \cite{MR0454365,MR695535}.
Our aim is to prove under which conditions
the one-frequency solitary waves
are the only finite energy solutions with compact spectrum,
defined as follows.

\begin{definition}\label{def-spectrum}
Let $\upsi\in\mathscr{S}'(\R^n\times\R)$,
and let
$\tilde\upsi(x,\omega)=\int\sb{\R} e^{\jj\omega t}\upsi(x,t)\,dt$
be its partial Fourier transform in time.
We say that the spectrum of $\upsi$ is compact if
there is a bounded interval
$I\subset\R$ such that
\[
\supp\tilde\upsi\subset\R^n\times I.
\]
\end{definition}

In \cite{partial-convolution},
we proved that
if the nonlinearity
in \eqref{eqns}
is represented by a function $\alpha(\tau)$
which is either a polynomial or an algebraic function
satisfying certain restrictions,
and moreover satisfying the growth estimate
\[
\abs{\alpha(\tau)}\le C(1+\abs{\tau}^\kappa),
\quad
\forall\tau\ge 0,
\quad
\mbox{with $\kappa$ satisfying}
\quad
\begin{cases}
\kappa>0,&n\le 2,
\\[1ex]
0<\kappa\le 2/(n-2),
&n\ge 3,
\end{cases}
\]
then
the only finite energy solutions with compact spectrum
are solitary waves \eqref{solitary-waves}.
The result was based on the Titchmarsh theorem for partial convolutions
(see \cite[Theorem 2]{partial-convolution}).
In the present article,
we show that under a slightly stronger restriction
\[
0<\kappa<2/(n-2),\qquad n\ge 3,
\]
%(or $n\ge 2$)
the proof can be simplified,
and extend the result to a larger class of algebraic functions $\alpha(\tau)$;
see Theorem~\ref{theorem-one-frequency} below.
In the essence, we prove that
if $0<\kappa<2/(n-2)$, then
solutions $u\in L^\infty(\R,H^1(\R^n))$
with compact spectrum
have improved regularity,
$u\in C^\infty\sb{\mathrm{b}}(\R,L^Q(\R^n)\cap C^{1,a}(\R^n))$,
with any $2\le Q\le\infty$ and any $a\in(0,1)$
(see Theorem~\ref{theorem-better-regularity} below).
This allows one to consider a wider class of algebraic nonlinearities
and to base the argument
on a simple version of the Titchmarsh theorem for partial
convolution in the case of
continuous functions (see Theorem~\ref{theorem-partial} below).

\medskip

We mention that there could be
multifrequency solitary wave solutions
of the form $\sum_{j=1}^{N}\phi_j(x)e^{-\jj\omega_j t}$,
which are known to exist in similar models.
In particular, there are
multifrequency solitary waves in the Klein--Gordon equation
with the mean-field self-interaction \cite{MR2526405}
and with several nonlinear oscillators  \cite{MR2579377}.
Bi-frequency solitary waves can exist in systems of nonlinear
Schr\"odinger equations \cite{PhysRevA.86.053809}
and
in the Soler model and Dirac--Klein--Gordon model
with Yukawa self-interaction
\cite{boussaid2018spectral}.
There are one-, two-, and four-frequency
solitary wave solutions
to the Klein--Gordon equation in discrete time-space
coupled with a nonlinear oscillator \cite{MR3007724}.
In Appendix~\ref{sect-multi-dirac},
we give an example of a four-frequency solitary wave solution
to the nonlinear Dirac equation.

The question of existence of multifrequency
solitary waves and more generally the solutions of compact spectrum
is related to the \emph{soliton resolution conjecture},
which proposes that the long-time asymptotics
of any finite energy solution
to a nonlinear dispersive system
%with $\mathbf{U}(1)$-symmetry
is given by a superposition of outgoing solitary waves
and an outgoing dispersive wave;
see
\cite{MR2032730,MR2275691,MR2304091,MR2308860}
%%(we also mention the probabilistic approach
%% \cite{MR3263670,MR3375596}).
The related results for the nonlinear wave equation
with the critical nonlinearity are obtained in
\cite{duyckaerts2016,duyckaerts2017soliton}.
%% By \cite{MR2032730,MR2308860},
%% this question
%% can be rephrased as the question
%% on the weak global attractor of finite energy solutions.
%% The multifrequency solitary waves,
%% when they exist,
%% also belong to this attractor.
One strategy to attack this problem was proposed in
\cite{MR2032730}:
%% one notices that any solution converges to \emph{radiationless solution},
%% the one that does not lose the energy any more.
%% Then one needs to complete the following two steps:

\medskip

%\begin{verse}
{\it
1.
Prove that any ``omega-limit''
radiationless solution
of finite energy

\ \quad
has a compact spectrum;
%% (see Definition~\ref{def-spectrum});
}

\medskip

%\noindent
{\it
2.
Prove that any solution with compact spectrum
has a spectrum

\ \quad
consisting of a single point,
and hence is a solitary wave.
%% \begin{eqnarray}\label{s-w-1}
%% \upsi(x,t)=
%% \phi(x)e^{-\jj\omega t},
%% \qquad
%% \omega\in\R,
%% \qquad
%% \phi\in H^1(\R^n).
%% \end{eqnarray}
}
%\end{verse}

\medskip

\noindent
Both steps of the program were accomplished
for several models without translation invariance,
such as the Klein--Gordon and Dirac equations
with several nonlinear oscillators
and with the mean field self-interaction
\cite{MR2526405,MR2745798,MR2579377,MR2902120,MR3007724}.
%% and also for the Klein--Gordon equation
%% in the discrete time-space
%% coupled to a nonlinear oscillator
%% \cite{MR3007724}.
See also the review \cite{komech2016attractors}.
%% In other words, in the models mentioned above,
%% \emph{the weak global attractor
%% is formed by solitary waves}.
While presently we can not prove that
any radiationless solution
of a sufficiently general system
has a compact spectrum
(this seems to be a hard task),
in this article we prove that,
under certain assumptions on the nonlinearity,
any solution with a compact spectrum
is a single-frequency solitary wave,
completing the second, easier step of the program
proposed in \cite{MR2032730}.
In particular, our result excludes the existence of
multifrequency solitary waves
under rather general assumptions.

\begin{remark}
There are solutions
to the sine-Gordon equation
known as
\emph{breathers},
exponentially localized in space and are periodic in time,
whose spectrum is not compact;
see e.g.
\cite{PhysRevLett.30.1262}.
We point out that the nonlinearity
in this equation is non-algebraic;
our results
on non-existence of solutions with compact spectrum
do not apply to such systems.
Similarly, the cubic nonlinear Schr\"odinger equation
admits breather-type solutions
\cite{akhmediev1987exact}
with the noncompact spectrum;
their charge and energy are infinite, so again our results
do not cover this case.
\end{remark}

We give the necessary results
on the Titchmarsh theorem for partial convolution
in Section~\ref{sect-titchmarsh}.
%% \ac{there is an interesting paper \cite{MR693042}..}
In Section~\ref{sect-kg},
%we recall the results on the unique continuation principle
%(Theorem~\ref{theorem-ucp})
%and on the global well-posedness for the 
%nonlinear Klein--Gordon equation (Theorem~\ref{theorem-kg-gwp}),
%and then
we derive the regularity results
for the solutions with compact spectrum
(Theorem~\ref{theorem-better-regularity}).
Then in Theorem~\ref{theorem-one-frequency}
we prove that
the nonlinear Schr\"odinger and Klein--Gordon equations
with a certain class of nonlinearities
do not admit multifrequency solitary wave solutions.
An example of a four-frequency solitary wave solution
to the nonlinear Dirac equation
is presented in Appendix~\ref{sect-multi-dirac}.

%\noindent
%{\bf Multifrequency solitary waves.\,}
%If a particular model admits multifrequency solutions,
%defined as exact localized solutions with several frequencies,
%then they also belong to the attractor.

\section{Titchmarsh theorem for partial convolution}
\label{sect-titchmarsh}

The Titchmarsh convolution theorem
\cite{titchmarsh1926zeros}
states that
$\sup\supp\phi\ast\psi
=\sup\supp \phi+\sup\supp\psi$,
for any
$\phi,\,\psi\in\mathscr{E}'(\R)$,
where $\mathscr{E}'(\R)$
is the space of distributions with compact support
(dual to the space $\mathscr{E}(\R)$
which is $C\sp\infty(\R)$ with the seminorms
$\sup\sb{\omega}\abs{f\sp{(k)}(\omega)}$).
We need a version of this theorem for a partial convolution
with respect to only a subset of variables;
for the proofs, we refer to \cite{partial-convolution}.

\begin{lemma}\label{lemma-upper}
For any function $\mu:\R^n\to\R$, $n\ge 1$,
there is a maximal lower semicontinuous function
on $\R^n$
which does not exceed $\mu$;
we denote this function
by $\mu^L(x)$.
Similarly,
there is a minimal upper semicontinuous function
on $\R^n$
which is not exceeded by $\mu$;
we denote this function
by $\mu^U(x)$.
For any $\mu,\,\nu:\R^n\to\R$,
\begin{eqnarray}\label{l-u}
&
\mu^L\le \mu\le\mu^U,
\\
\label{not-in-general}
&
(\mu+\nu)^L\ge\mu^L+\nu^L,
\qquad
(\mu+\nu)^U\le\mu^U+\nu^U.
\end{eqnarray}
\end{lemma}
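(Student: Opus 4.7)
The plan is to construct $\mu^L$ and $\mu^U$ by the standard envelope construction and then deduce the stated inequalities from the algebraic properties of semicontinuous functions. Specifically, I would set
\[
\mu^L(x) = \sup\{g(x) : g\colon\R^n\to\R\cup\{-\infty\} \text{ is lower semicontinuous and } g\le\mu\},
\]
and dually
\[
\mu^U(x) = \inf\{h(x) : h\colon\R^n\to\R\cup\{+\infty\} \text{ is upper semicontinuous and } h\ge\mu\}.
\]
Both families are nonempty (include the constant $-\infty$, resp.\ $+\infty$), so the sup and inf are well defined pointwise in $[-\infty,+\infty]$.

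To show that $\mu^L$ is itself lower semicontinuous, hence the maximal lsc function dominated by $\mu$, I would invoke the classical fact that an arbitrary pointwise supremum of lsc functions is lsc: if $\{g_i\}$ is the defining family, then for every $c\in\R$
\[
\{x : \mu^L(x) > c\} = \bigcup_i \{x : g_i(x) > c\}
\]
is open as a union of open sets. The argument for $\mu^U$ is dual, using that an arbitrary pointwise infimum of usc functions is usc. The inequalities $\mu^L\le\mu\le\mu^U$ in \eqref{l-u} are then built into the definitions, since every member of the family producing $\mu^L$ is bounded above by $\mu$, and every member of the family producing $\mu^U$ is bounded below by $\mu$.

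For \eqref{not-in-general}, I would combine the extremal characterization with the trivial fact that the sum of two lsc (resp.\ usc) functions is again lsc (resp.\ usc). Thus $\mu^L+\nu^L$ is lsc and satisfies $\mu^L+\nu^L\le\mu+\nu$, so it is admissible in the supremum defining $(\mu+\nu)^L$, giving $(\mu+\nu)^L\ge\mu^L+\nu^L$. Dually, $\mu^U+\nu^U$ is usc and $\ge\mu+\nu$, yielding $(\mu+\nu)^U\le\mu^U+\nu^U$.

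There is no substantive obstacle; the only point requiring a little care is to permit the envelopes to take values in the extended reals at points where $\mu$ is unbounded in every neighbourhood, so that the sum $\mu^L+\nu^L$ in \eqref{not-in-general} is always defined (one never encounters $\infty-\infty$ because the $L$-envelope can only fail to be finite by being $-\infty$, and symmetrically for $U$). As a sanity check and an alternative route, the same conclusions follow from the concrete formulas $\mu^L(x)=\liminf_{y\to x}\mu(y)$ and $\mu^U(x)=\limsup_{y\to x}\mu(y)$, under which \eqref{not-in-general} reduces to the familiar superadditivity of $\liminf$ and subadditivity of $\limsup$.
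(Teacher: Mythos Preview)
Your argument is correct and is the standard envelope construction; there is nothing to fix. Note, however, that the paper does not actually prove this lemma in the text: immediately before stating it, the author writes ``for the proofs, we refer to \cite{partial-convolution}'', and no proof environment follows the statement. So there is no in-paper proof to compare against; your approach is exactly the canonical one (and the alternative $\liminf/\limsup$ description you mention is how such envelopes are typically introduced in that reference as well).
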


We recall that the space of distributions
$\mathscr{D}'(\R^n)$ is defined as the dual to
$\mathscr{D}(\R^n)=C^\infty_{\mathrm{comp}}(\R^n)$
(with the standard seminorms),
while
$\mathscr{E}'(\R^n)$ is the space of distributions
with compact support
(the dual to $C^\infty(\R^n)$).

\begin{definition}
Let
$f\in\mathscr{D}'(\R^n\times\R)$.
We define the functions $\aa{f}$ and $\bb{f}$ by
\begin{eqnarray}
\nonumber
&&
\aa{f}:\;\R^n\to\R\sqcup\{\pm\infty\},
\qquad
x\mapsto
%% \inf\big((\{x\}\times\R)\cap\supp f\big);
\inf\big\{
\omega\in\R\sothat
(x,\omega)\in\supp f
\big\};
\\[1ex]
\nonumber
&&
\bb{f}:\;\R^n\to\R\sqcup\{\pm\infty\},
\qquad
x\mapsto
%\sup\big((\{x\}\times\R)\cap\supp f\big).
\sup
\big\{
\omega\in\R\sothat
(x,\omega)\in\supp f
\big\}.
\end{eqnarray}
\end{definition}

\begin{comment}
See Figure~\ref{fig-11}.
\end{comment}

It follows that
$\aa{f}$ is lower semicontinuous,
while
$\bb{f}$ is upper semicontinuous:
\[
\aa{f}(x)=\aa{f}^L(x),
\qquad
\bb{f}(x)=\bb{f}^U(x),
\qquad
\forall x\in\R^n.
\]
\begin{definition}\label{def-sigma}
Let $f\in\mathscr{D}'(\R^n\times\R)$.
We denote by
$\Sigma_f\subset\R^n$ the projection of
$\supp f\subset\R^n\times\R$ onto the first factor:
\[
\Sigma_f
=\big\{x\in\R^n\sothat
(\{x\}\times\R)\cap\supp f\ne\emptyset
\big\}\subset\R^n.
\]
\end{definition}

Thus, the following three statements are equivalent:
\[
x\not\in\Sigma_f
\qquad\Leftrightarrow\qquad
\aa{f}(x)=+\infty
\qquad\Leftrightarrow\qquad
\bb{f}(x)=-\infty.
\]

\begin{lemma}\label{lemma-sigma-closed}
For $f\in \mathscr{D}'(\R^n,\mathscr{E}'(\R))$,
the set $\Sigma_f\subset\R^n$ is closed.
\end{lemma}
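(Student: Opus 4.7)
The hypothesis $f\in\mathscr{D}'(\R^n,\mathscr{E}'(\R))$ is read as saying that $f$ is a continuous linear map from $\mathscr{D}(\R^n)$ into the space of compactly supported distributions on $\R$, and by the standard vector-valued/kernel interpretation this yields a local uniformity of support in the $\omega$-variable: for every compact $K\subset\R^n$ there exists a compact interval $L\subset\R$ with
\[
\supp f\cap(K\times\R)\subset K\times L.
\]
I would take this localization as my starting point; it is what distinguishes the present hypothesis from the bare $f\in\mathscr{D}'(\R^n\times\R)$, for which $\Sigma_f$ need not be closed (the projection of a closed set along a noncompact factor is in general not closed).

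To show $\Sigma_f$ is closed I would prove that its complement is open. Fix $x_0\notin\Sigma_f$ and let $U$ be a bounded open neighborhood of $x_0$ with compact closure $\bar U$. Applying the uniform support statement to $K=\bar U$ produces a compact $L\subset\R$ with $\supp f\cap(\bar U\times\R)\subset\bar U\times L$. Writing $\pi\colon\R^n\times\R\to\R^n$ for the projection onto the first factor, this inclusion gives
\[
\Sigma_f\cap U\;=\;\pi\bigl(\supp f\cap(\bar U\times L)\bigr)\cap U,
\]
since any $(x,\omega)\in\supp f$ with $x\in U$ automatically has $\omega\in L$.

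The set $\supp f\cap(\bar U\times L)$ is closed in $\R^n\times\R$ and lies in the slab $\R^n\times L$ with $L$ compact, so its image under $\pi$ is closed in $\R^n$ (projection along a compact factor is a closed map). Intersecting with the open set $U$ exhibits $\Sigma_f\cap U$ as a relatively closed subset of $U$, so $U\setminus\Sigma_f$ is an open neighborhood of $x_0$; this shows $\R^n\setminus\Sigma_f$ is open. The only point that requires real care is the very first step — converting $f\in\mathscr{D}'(\R^n,\mathscr{E}'(\R))$ into the local uniform support statement. Once that is in place, the rest is the short topological observation that restriction to $\bar U$ makes the $\omega$-support globally compact, which reduces the claim to the closed-projection lemma.
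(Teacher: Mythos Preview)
Your argument is correct. The one step you flag as delicate --- passing from $f\in\mathscr{D}'(\R^n,\mathscr{E}'(\R))$ to the local uniform support statement --- is indeed the standard content of this hypothesis: a continuous linear map from the Fr\'echet space $\mathscr{D}_K(\R^n)$ into $\mathscr{E}'(\R)$ sends bounded sets to bounded sets, and bounded subsets of $\mathscr{E}'(\R)$ are equicontinuous, hence supported in a common compact set. Equivalently, under the kernel identification, $\mathscr{D}'(\R^n,\mathscr{E}'(\R))$ consists exactly of those $f\in\mathscr{D}'(\R^n\times\R)$ for which the projection $\supp f\to\R^n$ is proper. With that in hand, your reduction to ``projection along a compact factor is closed'' is clean and complete.

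As for comparison: the paper does not actually prove Lemma~\ref{lemma-sigma-closed} in the text. At the start of Section~\ref{sect-titchmarsh} it states that for the proofs of these preparatory lemmas it refers to \cite{partial-convolution}. So there is no in-paper argument to compare against; your proof stands on its own and is the natural one.
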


%\begin{remark}
%$\Sigma_f$ is not necessarily closed for
%$f\in \mathscr{D}'(\R^n\times\R)$.
%\end{remark}

\begin{lemma}\label{lemma-a-a-b}
For any distribution $f\in\mathscr{D}'(\R^n\times\R)$,
one has:
\begin{eqnarray}\label{a-a-b}
&
\aa{f}(x)\le \aa{f}^U(x)\le \bb{f}(x),
\qquad
\aa{f}(x)\le \bb{f}^L(x)\le \bb{f}(x),
\qquad
\forall x\in\Sigma_f;
\\[2ex]
\label{a-u-l-a}
&
(\aa{f}^U)^L\ge \aa{f},
\qquad
(\bb{f}^L)^U\le \bb{f}.
\end{eqnarray}
\end{lemma}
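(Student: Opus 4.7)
The plan is to handle \eqref{a-u-l-a} first as a quick bookkeeping consequence of monotonicity of the envelope operations combined with the semicontinuity statements recorded just above the lemma, and then attack \eqref{a-a-b} by a sequence/compactness argument exploiting closedness of $\supp f$. For \eqref{a-u-l-a}, the mechanism is monotonicity of $(\cdot)^L$: if $g\ge h$, then $g^L\ge h^L$, because any lower semicontinuous minorant of $h$ is automatically a minorant of $g$, hence a competitor in the supremum defining $g^L$. Applied to $\aa{f}^U\ge \aa{f}$ from Lemma~\ref{lemma-upper}, this gives $(\aa{f}^U)^L\ge \aa{f}^L=\aa{f}$, where the last equality uses lower semicontinuity of $\aa{f}$. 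The dual monotonicity $h\le g\Rightarrow h^U\le g^U$, applied to $\bb{f}^L\le \bb{f}$ together with $\bb{f}^U=\bb{f}$, yields $(\bb{f}^L)^U\le \bb{f}$.

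For \eqref{a-a-b}, the innermost inequalities $\aa{f}\le \aa{f}^U$ and $\bb{f}^L\le \bb{f}$ are immediate from the definitions of the envelopes. The substance is in $\aa{f}^U(x)\le \bb{f}(x)$ and $\aa{f}(x)\le \bb{f}^L(x)$ for $x\in\Sigma_f$. The key observation I would use is that for every $y\in\Sigma_f$ the vertical slice $\{\omega\in\R:(y,\omega)\in\supp f\}$ is a nonempty closed subset of $\R$, so whenever $\aa{f}(y)$ is finite the defining infimum is attained and $(y,\aa{f}(y))\in\supp f$; symmetrically $(y,\bb{f}(y))\in\supp f$ whenever $\bb{f}(y)$ is finite. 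With that in hand I would unfold $\aa{f}^U(x)=\limsup_{y\to x}\aa{f}(y)$, extract a sequence $y_n\to x$ inside $\Sigma_f$ realizing this limsup, and pass $(y_n,\aa{f}(y_n))\to (x,\aa{f}^U(x))$ to the limit in the closed set $\supp f$. This places $(x,\aa{f}^U(x))$ in $\supp f$, forcing $\aa{f}^U(x)\le \bb{f}(x)$. The companion inequality $\aa{f}(x)\le \bb{f}^L(x)$ follows by the mirror argument using $\liminf$ and the attained supremum.

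The main obstacle I anticipate is the $\pm\infty$ bookkeeping near $\partial\Sigma_f$: a sequence $y_n\to x\in\Sigma_f$ may leak out of $\Sigma_f$, where $\aa{f}=+\infty$ and $\bb{f}=-\infty$, which naively sends $\aa{f}^U$ to $+\infty$ and $\bb{f}^L$ to $-\infty$ at boundary points and threatens to break the claimed inequalities. Resolving this cleanly requires interpreting the envelope operations relative to the closed set $\Sigma_f$ furnished by Lemma~\ref{lemma-sigma-closed}, so that the relevant $\limsup$/$\liminf$ is effectively taken over $y\in\Sigma_f$ and the sequence extracted above actually lands in $\supp f$; this is the one step in the plan I expect to have to justify with care rather than dispatch routinely.
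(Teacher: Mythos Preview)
The paper does not prove this lemma: at the start of Section~\ref{sect-titchmarsh} it writes ``for the proofs, we refer to \cite{partial-convolution}'', and Lemma~\ref{lemma-a-a-b} is merely quoted. So there is no in-paper argument to compare against; your plan is the standard one and matches what is done in the cited reference.

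Your treatment of \eqref{a-u-l-a} via monotonicity of $(\cdot)^L$, $(\cdot)^U$ together with the recorded identities $\aa{f}^L=\aa{f}$, $\bb{f}^U=\bb{f}$ is correct. For \eqref{a-a-b} the sequence-and-closedness argument is also the right mechanism, and you have correctly isolated the only genuine difficulty, the $\pm\infty$ values off $\Sigma_f$. One caution about your proposed fix: Lemma~\ref{lemma-sigma-closed} is stated for $f\in\mathscr{D}'(\R^n,\mathscr{E}'(\R))$, not for arbitrary $f\in\mathscr{D}'(\R^n\times\R)$ as in the hypothesis of Lemma~\ref{lemma-a-a-b}; for general $f$ the projection $\Sigma_f$ need not be closed (take $f=\sum_{k\ge 1}\delta_{(1/k,\,k)}$ on $\R\times\R$), and with the naive extended-real reading one even gets $\aa{f}^U(1/k)=+\infty>k=\bb{f}(1/k)$, so \eqref{a-a-b} would fail as literally stated at such points. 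This is less a defect in your strategy than a signal that the envelopes here are to be read with the conventions of \cite{partial-convolution}; in every application the present paper actually makes of Lemma~\ref{lemma-a-a-b} the distribution lies in $\mathscr{E}'(\R,C(\R^n))$, where Lemma~\ref{lemma-sigma-closed} does apply and your argument goes through cleanly.
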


For $f,\,g\in C^\infty\sb{\mathrm{comp}}(\R^n\times\R)$,
the partial convolution
\[
\astomega:\;
C^\infty\sb{\mathrm{comp}}(\R^n\times\R)
\times
C^\infty\sb{\mathrm{comp}}(\R^n\times\R)
\to
C^\infty\sb{\mathrm{comp}}(\R^n\times\R)
\]
is defined by
\begin{eqnarray}\label{partial-convolution}
(f\astomega g)(x,\omega)
=\int\sb{\R}f(x,\omega-\tau)g(x,\tau)\,d\tau,
\qquad
(x,\omega)\in\R^n\times\R.
\end{eqnarray}
This operation is continuously extended to
$f,\,g\in \mathscr{E}'(\R,C(\R^n))$:
\[
\astomega:\;
\mathscr{E}'(\R,C(\R^n))
\times
\mathscr{E}'(\R,C(\R^n))
\to
\mathscr{E}'(\R,C(\R^n)).
\]

\begin{theorem}
%[Titchmarsh theorem for partial convolution for continuous functions]
\label{theorem-partial}
Let $f,\,g\in \mathscr{E}'(\R,C(\R^n))$.
One has:
\[
\aa{f\astomega g}^U
=
\aa{f}^U+\aa{g}^U
,
\quad\qquad
\bb{f\astomega g}^L
=
\bb{f}^L+\bb{g}^L.
\]
%%\end{enumerate}
\end{theorem}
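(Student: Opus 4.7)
My plan is to reduce the statement to the classical Titchmarsh convolution theorem applied fiberwise in $\omega$, and then promote the resulting pointwise identity to one for the semicontinuous envelopes. Since $f, g \in \mathscr{E}'(\R, C(\R^n))$, for each fixed $x \in \R^n$ the slices $f(x, \cdot)$ and $g(x, \cdot)$ are compactly supported distributions on $\R$ depending continuously on $x$, and the partial convolution reduces to the scalar convolution fiberwise, $(f \astomega g)(x, \cdot) = f(x, \cdot) \ast g(x, \cdot)$. Applying Titchmarsh's theorem at each $x$ where the slices are nonzero yields the pointwise identities $\aa{f \astomega g}(x) = \aa{f}(x) + \aa{g}(x)$ and $\bb{f \astomega g}(x) = \bb{f}(x) + \bb{g}(x)$, with the natural conventions when a slice vanishes.

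One half of the theorem is then automatic from Lemma~\ref{lemma-upper}, which gives $(\mu + \nu)^U \le \mu^U + \nu^U$ and $(\mu + \nu)^L \ge \mu^L + \nu^L$, hence
\[
\aa{f \astomega g}^U = (\aa{f} + \aa{g})^U \le \aa{f}^U + \aa{g}^U,
\qquad
\bb{f \astomega g}^L \ge \bb{f}^L + \bb{g}^L.
\]

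The essential content is the reverse inequality. The plan is, given $x_0 \in \R^n$, to construct a sequence $x_k \to x_0$ simultaneously realizing $\aa{f}(x_k) \to \aa{f}^U(x_0)$ and $\aa{g}(x_k) \to \aa{g}^U(x_0)$; the pointwise identity at each $x_k$ then forces $\aa{f \astomega g}(x_k) \to \aa{f}^U(x_0) + \aa{g}^U(x_0)$, yielding the required lower bound on $\aa{f \astomega g}^U(x_0)$. The main obstacle is that for arbitrary pairs of lower semicontinuous functions $\mu, \nu$ the inequality $\limsup(\mu + \nu) \le \limsup \mu + \limsup \nu$ can be strict, so this simultaneous diagonalization is not automatic and must genuinely invoke the regularity $f, g \in \mathscr{E}'(\R, C(\R^n))$. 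I expect the mechanism to be that continuity of $x \mapsto f(x, \cdot)$ into $\mathscr{E}'(\R)$, together with the compactness of the $\omega$-support, couples the low-frequency ends of $f$ and $g$ as $x$ varies near $x_0$, so that they cannot fluctuate independently; a Paley--Wiener analysis of the slices, or a direct examination of how $\supp f$ sits in $\R^n \times \R$, should make this coupling quantitative enough to extract a common convergent subsequence. The argument for $\bb{f \astomega g}^L$ is entirely dual and presents no additional difficulty.
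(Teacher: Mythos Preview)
Your fiberwise application of Titchmarsh is the right opening, and it is also how the paper begins. But there is a conflation in your first step: pointwise Titchmarsh gives
\[
\inf\supp(f\astomega g)(x,\cdot)=\inf\supp f(x,\cdot)+\inf\supp g(x,\cdot),
\]
a statement about the supports of the \emph{slices} $f(x,\cdot)\in\mathscr{E}'(\R)$. This is not the same as $\aa{f}(x)$, which is defined through $\supp f\subset\R^n\times\R$. In general $\aa{f}(x)\le\inf\supp f(x,\cdot)$, and the inequality can be strict even at points where $f(x,\cdot)\not\equiv 0$, so your asserted identity $\aa{f\astomega g}(x)=\aa{f}(x)+\aa{g}(x)$ is not what Titchmarsh actually delivers.

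More importantly, the paper's route dissolves your ``main obstacle'' rather than attacking it. Instead of trying to diagonalize two limsups simultaneously, the paper observes that continuity of $x\mapsto f(x,\cdot)$ forces the slice function $x\mapsto\inf\supp f(x,\cdot)$ to be \emph{upper semicontinuous}: if $\langle\rho,f(x,\cdot)\rangle=0$ on an open set of $x$'s, continuity propagates this to the closure, so the set where $\supp f(x,\cdot)$ avoids a given open interval is closed. The paper then identifies this slice function directly with $\aa{f}^U$ on the set where $f(x,\cdot)\not\equiv 0$. Once that identification is made separately for $f$, for $g$, and for $f\astomega g$, the pointwise Titchmarsh identity \emph{is} the identity between the $^U$-envelopes; no subsequence extraction is required. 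In particular there is no ``coupling between $f$ and $g$'' to exploit---the mechanism is a property of each factor on its own---so your Paley--Wiener strategy is aimed at the wrong target. The gap in your proposal is precisely this missing identification of the slice infimum with the envelope $\aa{f}^U$; without it, the reverse inequality remains open and your suggested route to it does not close.
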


%% \begin{lemma}\label{lemma-titchmarsh-c}
%% Let $f,\,g\in C(\R^n,\mathscr{E}'(\R))$.
%% Then
%% \[
%% \aa{f\astomega g}^U(x)
%% =
%% \aa{f}^U(x)
%% +
%% \aa{g}^U(x),
%% \qquad
%% \bb{f\astomega g}^L(x)
%% =
%% \bb{f}^L(x)
%% +
%% \bb{g}^L(x).
%% \]
%% \end{lemma}

\begin{proof}
%% Let us prove a similar statement
%% for the case $f,\,g\in \mathscr{E}'(\R,C(\R^n))$.
Since $f$ and $g$ depend continuously on $x$,
the Titchmarsh convolution theorem
can be applied pointwise in $x$,
yielding
\begin{eqnarray}\label{standard-Titchmarsh}
\inf\supp (f\astomega g)(x,\cdot)
=
\inf\supp f(x,\cdot)
+
\inf\supp g(x,\cdot),
\qquad
\forall x\in\R^n,
\end{eqnarray}
and similarly for $\sup$.
Let $f\in \mathscr{E}'(\R,C(\R^n))$
and let $\rho\in\mathscr{D}(\R)$.
If $\mathcal{O}\subset\R^n$ is an open set such that
$\langle \rho,f(x,\cdot)\rangle=0$
for all $x\in \mathcal{O}$,
then, by continuity of $f$ in $x$,
one also has $\langle \rho,f(x,\cdot)\rangle=0$
for all $x$ from the closure of $\mathcal{O}$.
Therefore, given an open set $\Omega\subset\R$,
if $\Omega\cap\supp f(x,\cdot)=\emptyset$ for $x\in \mathcal{O}\subset\R^n$,
then
$\Omega\cap\supp f(x,\cdot)=\emptyset$ for $x$ from the closure of $\mathcal{O}$;
it follows that
\[
\aa{f}^U(x)=\inf\supp f(x,\cdot),
\quad
\bb{f}^L(x)=\sup\supp f(x,\cdot),
\quad
\forall x\ \mbox{such that}\ f(x,\cdot)\not\equiv 0.
\]
Using these relations for each of the terms
in \eqref{standard-Titchmarsh}
(and similarly for $\sup\supp$)
leads to the desired relations.
\end{proof}

\begin{remark}
In a more general case $f,\,g\in \mathscr{E}'(\R,L^2\sb{\mathrm{loc}}(\R^n))$,
there is a result similar to Theorem~\ref{theorem-partial}
(see \cite[Theorem 2]{partial-convolution}):
\[
\aa{f\astomega g}
=
\big(\aa{f}^U+\aa{g}\big)^L
=\big(\aa{f}+\aa{g}^U\big)^L,
\qquad
\bb{f\astomega g}
=
\big(\bb{f}^L+\bb{g}\big)^U
=\big(\bb{f}+\bb{g}^L\big)^U.
\]
\end{remark}

\bigskip

\section{Compact spectrum solutions to nonlinear Klein--Gordon equation}
\label{sect-kg}

\begin{assumption}\label{ass-kappa}
In \eqref{eqns},
the nonlinearity is represented by
$f(u)=\alpha(\abs{u}^2)u$,
with
$\alpha\in C\big(\overline{\R\sb{+}},\R\big)$,
$\alpha(0)=0$,
and there is $C<\infty$
such that
\begin{eqnarray}\label{kappa-such}
%&&
\quad
\abs{\alpha(\tau)}\le C(1+\abs{\tau}^\kappa),
\quad
\forall\tau\ge 0,
\quad
%\\[1ex]
%\nonumber
%&&
%\mbox{with $\kappa$ satisfying}
\mbox{where}
\quad
\begin{cases}
\kappa>0,& n\le 2;
\\[1ex]
0<\kappa<2/(n-2),&
n\ge 3.
\end{cases}
\end{eqnarray}
\end{assumption}

%% \begin{definition}\label{def-algebraic}
%% We will say that $\alpha\in C\big(\overline{\R\sb{+}},\C\big)$
%% is \emph{algebraic}
%% %%of degree $n$
%% if there is
%% a polynomial $\mathcal{P}(\tau,w)$ of $\tau$ and $w$
%% such that
%% for each $\tau\ge 0$
%% one has
%% \[
%% \mathcal{P}(\tau,\alpha(\tau))=0
%% \qquad
%% \forall \tau\ge 0,
%% \]
%% where
%% $\mathcal{P}(\tau,w)$
%% is a finite degree polynomial
%% in $\tau$ and $w$
%% with complex coefficients.
%% \end{definition}

\begin{assumption}\label{ass-algebraic}
$\alpha\in C\big(\overline{\R\sb{+}},\R\big)$ is a non-constant
function
such that there is $J\in\N$
and polynomials
$M_j(\tau)$, $0\le j\le J$,
with $M_J(\tau)\not\equiv 0$
and with
\begin{eqnarray}\label{q-k-j}
\deg M_0>\deg M_j+j,\qquad
\forall j,
\quad 1\le j\le J,
\end{eqnarray}
such that
$w(\tau):=\tau \alpha(\tau)$
satisfies the relation
\[
\mathcal{M}\big(\tau,w(\tau)\big)=0,
\qquad
\forall\tau\ge 0,
\qquad
\mbox{where}
\quad
\mathcal{M}(\tau,w)
:=
\sum_{j=0}^J M_j(\tau)w^j.
\]
We follow the convention that the degree of the zero
polynomial equals $-\infty$,
so that
\eqref{q-k-j}
is formally satisfied
when $M_j(\tau)\equiv 0$ for some $j\ge 1$.
\end{assumption}
%% \begin{remark}
%% \ac{to be checked}
%% If we can prove that
%% $\supp\tilde\upsi
%% =\supp(m^2+\xi^2-\omega^2)\tilde\upsi$,
%% then the sufficient condition would be as follows:
%% \begin{eqnarray}\label{ass-algebraic-all}
%% \mbox{\it There is \ $j\sb\ast\ge 0$, \ $j\sb\ast\le J$, such that}
%% \quad
%% \deg M_{j\sb\ast}\ge\deg M_j
%% \quad
%% \forall j\ne j\sb\ast,
%% \quad
%% 0\le j\le J.
%% \end{eqnarray}
%% \end{remark}

\begin{example}
Let
\[
\alpha(\tau)=\pm A(\tau)^{1/N},
\]
with $N\in\N$, $N\ge 2$,
and with
$A(\tau)=\sum\sb{j=0}^a A_j\tau^j$,
a polynomial with real coefficients of degree $a=\deg A\ge 1$;
if $N$ is even, we additionally assume that
$A(\tau)\ge 0$ for $\tau\ge 0$.
Let
$M_0(\tau)=-(\pm \tau)^N A(\tau)$
and $M_N(\tau)=1$,
with $\deg M_0=a+N$ and $\deg M_N=0$.
Then,
for all $\tau\ge 0$,
\[
\mathcal{M}\big(\tau,\tau \alpha(\tau)\big)
=M_0(\tau)+M_N(\tau)(\tau \alpha(\tau))^N
=
-(\pm \tau)^N A(\tau)\cdot 1+1\cdot(\tau \alpha(\tau))^N=0.
%\qquad\forall\tau\ge 0.
\]
If $n\le 2$,
then both
Assumptions~\ref{ass-kappa}
and~\ref{ass-algebraic}
are satisfied
for any $N\in\N$, $N\ge 2$,
and $a\in\N$.
If $n\ge 3$,
we additionally need
$\kappa=a/N$ to satisfy $\kappa<2/(n-2)$.

We note that in \cite{partial-convolution},
one could only consider the case
$a=1$, $N=2$
(thus $\alpha(\tau)=\pm\sqrt{A_0+A_1\tau}$,
with $A_0\ge 0$ and $A_1>0$), $3\le n\le 6$.
Since in the present article
we do not cover the case $\kappa=\frac{2}{n-2}$,
our results do not apply to this nonlinearity in dimension $n=6$,
the case which is covered by \cite{partial-convolution}.
\end{example}

\begin{example}
Let
\[
\alpha(\tau)
=
\pm\Big(
\frac{A(\tau)}{B(\tau)}\Big)^{1/N},
\]
with
$N\in\N$, and
$A(\tau),\,B(\tau)$ polynomials
with real coefficients
of degrees $a=\deg A\in\N_0$ and $b=\deg B\in\N_0$;
$B(\tau)\ne 0$ for $\tau\ge 0$.
We assume that either
$A(\tau)/B(\tau)\ge 0$ for all $\tau\ge 0$
or that $N$ is odd.
Let
$M_0(\tau)=-(\pm\tau)^N A(\tau)$,
$M_j\equiv 0$ for $1\le j<N$,
$M_N(\tau)=B(\tau)$;
$\deg M_0=a+N$, $\deg M_N=b$.
Then, for all $\tau\ge 0$,
\[
\mathcal{M}\big(\tau,\tau \alpha(\tau)\big)
=
M_0(\tau)
+
M_N(\tau)(\tau \alpha(\tau))^N
=-(\pm \tau)^N A(\tau)\cdot 1+B(\tau)\cdot
(\tau \alpha(\tau))^N=0.
%\qquad\forall\tau\ge 0.
\]
If $n\le 2$, then both Assumption~\ref{ass-kappa}
and Assumption~\ref{ass-algebraic}
are satisfied if $a>b$
(so that \eqref{q-k-j} holds).
If $n\ge 3$, then
we additionally need
$N\in\N$ to be large enough so that
$
\kappa=(a-b)/N$
satisfies
$\kappa<2/(n-2)$.

Let us mention that in \cite{partial-convolution},
besides the case $n\le 2$ (with any $a>b\ge 0$ and any $N\in\N$),
we could only consider the case $a=2$, $b=1$, $N=1$, $n=3$
and the case $a=1$, $b=0$, $N=2$, $3\le n\le 6$.
Since in this article we exclude the case
$\kappa=2/(n-2)$,
our present results
do not apply to the case
$a=1$, $b=0$, $N=2$
(which is again
$\alpha(\tau)=\pm\sqrt{A_0+A_1\tau}$,
with $A_0\ge 0$ and $A_1>0$)
in dimension $n=6$.
\end{example}

\subsection{Regularity}

\begin{theorem}
\label{theorem-better-regularity}
Let
$\upsi\in L^\infty(\R,H^1(\R^n))$, $n\in\N$.
If there is a bounded interval $I\subset\R$
such that $\supp\tilde\upsi\subset\R^n\times I$,
with $\tilde\upsi(x,\omega)$ the Fourier transform of $\upsi$
with respect to time,
then
\begin{eqnarray}\label{u-tt-good}
\upsi\in C\sb{\mathrm{b}}^\infty\big(\R,H^1(\R^n)\big).
\end{eqnarray}
Further, assume that $\upsi$ is a solution
to the nonlinear Schr\"odinger or Klein--Gordon equation
\eqref{eqns}
with some $m>0$ and
with $\alpha(\cdot)$ satisfying Assumption~\ref{ass-kappa}.
Then
\begin{eqnarray}
\label{u-good}
\upsi\in C^\infty\sb{\mathrm{b}}\big(\R,L^Q(\R^n)\cap C^{1,a}(\R^n)\big),
\qquad
\forall Q\in[2,\infty],
\qquad
\forall a\in(0,1).
\end{eqnarray}
\end{theorem}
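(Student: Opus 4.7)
\emph{Step 1: smoothness in time.} Pick $\chi\in C^\infty_c(\R)$ with $\chi\equiv 1$ on $I$, and let $\phi=\mathcal{F}^{-1}\chi\in\mathscr{S}(\R)$. Since $\supp\tilde u\subset\R^n\times I$, the identity $\chi\tilde u=\tilde u$ is equivalent to
\[
u(x,t)=\int_\R\phi(t-s)\,u(x,s)\,ds,
\]
so $\p_t^k u=\phi^{(k)}\ast_t u$ for every $k\in\N_0$. Minkowski's inequality yields $\p_t^k u\in L^\infty(\R,H^1(\R^n))$, proving \eqref{u-tt-good}. Moreover, as soon as we prove $u\in L^\infty(\R,X)$ for some Banach space $X\hookrightarrow\mathscr{S}'(\R^n)$, the same identity promotes this to $u\in C^\infty_b(\R,X)$; it therefore suffices to upgrade the spatial regularity uniformly in~$t$.

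\emph{Step 2: elliptic form of the equation.} Rewrite \eqref{eqns} as
\[
(1-\Delta)u=u+\jj\p_t u-\alpha(\abs{u}^2)u\qquad\text{or}\qquad(m^2-\Delta)u=-\p_t^2 u-\alpha(\abs{u}^2)u,
\]
in the Schr\"odinger and Klein--Gordon cases respectively. The resolvent $(1-\Delta)^{-1}$ (resp.\ $(m^2-\Delta)^{-1}$) is convolution with a Bessel potential and sends $L^q(\R^n)$ into $W^{2,q}(\R^n)$ for every $q\in(1,\infty)$. By Step~1, $\p_t u$ and $\p_t^2 u$ lie in $L^\infty_t H^1_x$, hence in $L^\infty_t L^q_x$ for any $q\in[2,2^*]$ (with $2^*=\frac{2n}{n-2}$ for $n\ge 3$, and any $q<\infty$ when $n\le 2$), so the linear terms never restrict the iteration.

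\emph{Step 3: subcritical bootstrap.} For $n\le 2$ the embedding $H^1\hookrightarrow L^p$ for every $p<\infty$ makes the argument essentially trivial, so assume $n\ge 3$. Assumption~\ref{ass-kappa} gives $\abs{\alpha(\abs{u}^2)u}\le C(\abs{u}+\abs{u}^{2\kappa+1})$. Suppose inductively that $u\in L^\infty_t L^{p}_x$ for some $p\in[2^*,\infty)$; then $\alpha(\abs{u}^2)u\in L^\infty_t L^q_x$ with $q=p/(2\kappa+1)$, and Step~2 gives $u\in L^\infty_t W^{2,q}_x$. As long as $q<n/2$, Sobolev embedding yields $u\in L^\infty_t L^{p'}_x$ with
\[
\frac{1}{p'}=\frac{2\kappa+1}{p}-\frac{2}{n},\qquad\text{so that}\qquad\frac{1}{p}-\frac{1}{p'}=\frac{2}{n}-\frac{2\kappa}{p}\ge\frac{2-\kappa(n-2)}{n}>0,
\]
the last strict inequality using $p\ge 2^*$ together with the strict assumption $\kappa<2/(n-2)$. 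Every iteration therefore decreases $1/p$ by at least a fixed positive amount, and after finitely many steps $q$ crosses the threshold $n/2$; at that point $W^{2,q}\hookrightarrow L^\infty$, yielding $u\in L^\infty_t L^\infty_x$. Interpolating with $u\in L^\infty_t L^2_x$ gives $u\in L^\infty_t L^Q_x$ for every $Q\in[2,\infty]$.

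\emph{Step 4: H\"older regularity and conclusion.} Once $u$ is uniformly bounded in every $L^Q(\R^n)$, the right-hand side of the elliptic equation is bounded in every $L^r(\R^n)$, and a final application of the Bessel estimate gives $u\in L^\infty_t W^{2,r}_x$ for every $r\in[2,\infty)$. Morrey's embedding $W^{2,r}\hookrightarrow C^{1,1-n/r}$ applied with $r>n$ then yields $u\in L^\infty_t C^{1,a}_x$ for every $a\in(0,1)$, and Step~1 upgrades all the foregoing $L^\infty_t$ bounds to $C^\infty_b(\R,\cdot)$; this is precisely \eqref{u-good}. The delicate point is Step~3: the strict inequality $\kappa<2/(n-2)$ is essential for the uniform positive gap in the Sobolev exponent at each iteration, and it is exactly at the borderline $\kappa=2/(n-2)$ that the gain vanishes and the simple bootstrap fails, consistent with the fact that the weaker result in \cite{partial-convolution} required the more elaborate partial-convolution machinery to cover the endpoint.
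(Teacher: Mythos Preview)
Your Steps 1, 2, and 4 match the paper's argument essentially verbatim, and the overall bootstrap strategy in Step~3 is the same. However, Step~3 has a genuine gap in the range $1/(n-2)<\kappa<2/(n-2)$, $n\ge 3$.

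You set $q=p/(2\kappa+1)$ and assert that the full right-hand side of the elliptic equation lies in $L^\infty_t L^q_x(\R^n)$. At the first step $p=2^*=2n/(n-2)$, so $q<2$ precisely when $\kappa>1/(n-2)$. But neither the linear piece $\partial_t^2 u$ nor the $\abs{u}$ part of the bound $\abs{\alpha(\abs{u}^2)u}\le C(\abs{u}+\abs{u}^{2\kappa+1})$ belongs to $L^q(\R^n)$ for $q<2$: you only know $u,\,\partial_t^2 u\in L^2\cap L^p$, and $L^2\cap L^p$ does not embed into $L^q$ on the whole space when $q<2$. Consequently the Bessel-potential step ``$u\in L^\infty_t W^{2,q}_x$'' is unjustified, and the iteration stalls before it starts. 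Your remark that ``the linear terms never restrict the iteration'' is exactly where the error hides.

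The paper closes this gap by running the bootstrap locally, on balls $\mathbb{B}^n_R(x_0)$ with bounds uniform in $x_0$: on a bounded domain, H\"older gives $L^2(\mathbb{B})\hookrightarrow L^P(\mathbb{B})$ for $P<2$, so $\partial_t^2 u$, $u$, and the low-order nonlinear piece all land in $L^P(\mathbb{B}^n_R(x_0))$. Interior elliptic regularity then gives $u\in W^{2,P}(\mathbb{B}^n_R(x_0))$ uniformly in $x_0$, and the Sobolev gain \eqref{qq} proceeds exactly as you wrote. (The paper even flags this point in a remark: when $\kappa\le 1/(n-2)$ one has $P\ge 2$ and the localization is unnecessary.) Your argument is correct once you insert this localization; alternatively, you could keep the global formulation but work in the sum space $L^2(\R^n)+L^{p/(2\kappa+1)}(\R^n)$ and track two Sobolev embeddings in parallel.
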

Above,
\[
C^\infty\sb{\mathrm{b}}(\R)
=\big\{
f\in C^\infty\sb{\mathrm{b}}(\R)
\sothat
\ \sup\sb{t\in\R}\abs{\p_t^j f(t)}<\infty
\quad
\forall j\in\N_0
\big\}
\]
and $C^{1,a}(\R^n)$
is a Banach space with the standard norm
\[
\norm{u}_{C^{1,a}(\R^n)}
=
\sup\sb{x\in\R^n}\abs{u(x)}
+\sup\sb{x\in\R^n}\abs{\nabla u(x)}
+\sup\sb{x,\,y\in\R^n;\,x\ne y}
\frac{\abs{\nabla u(x)-\nabla u(y)}}{\abs{x-y}^a}.
\]

\begin{proof}
We recall that the \emph{quasimeasures}
$\mathscr{Q}(\R)\subset \mathscr{E}'(\R)$
are defined as distributions whose Fourier transform belongs to $L^\infty(\R)$
\cite{MR2032730,MR2308860}.
%% We also recall that
%% operator of multiplication by the function $M\in C(\R)$,
%% $M:\,C^\infty(\R)\to C^\infty(\R)$,
%% $M:\,\mu\mapsto M\mu$,
%% extends to a linear continuous map in the space of quasimeasures
%% if 
Since
$\upsi\in L^\infty(\R,H^1(\R^n))$,
by definition, its Fourier transform is a quasimeasure:
\[
\tilde\upsi(x,\omega)
\in \mathscr{Q}(\R,H^1(\R^n)).
\]
Let $j\in\N$.
Pick $\rho\in C^\infty\sb{\mathrm{comp}}(\R)$
such that
$\rho(\omega)=(-\omega)^j$ for all $\omega\in I$.
The inverse Fourier transform of $\rho$
satisfies
$\check\rho=\mathscr{F}^{-1}[\rho]\in\mathscr{S}(\R)
\subset L^1(\R)$,
hence
\begin{eqnarray}\label{p-t-good}
\p_t^j\upsi(x,t)
=\mathscr{F}^{-1}[\rho(\cdot)\tilde\upsi(x,\cdot)](t)
=\check\rho\ast \upsi(x,\cdot)
\in L^\infty(\R,H^1(\R^n)).
\end{eqnarray}
In other words, $\rho$ is a multiplier in the space of quasimeasures
(for more details, see \cite{MR2308860}).
The inclusion
\eqref{p-t-good}
with any $j\in\N$
shows that
$\upsi\in C^\infty(\R,H^1(\R^n))$,
with $\norm{\p_t^j \upsi(\cdot,t)}_{H^1}$,
for each $j\in\N$, bounded uniformly in $t\in\R$.
This proves \eqref{u-tt-good}.

Let us prove \eqref{u-good}.
There is nothing to do in the case $n=1$
since the inclusion
$
\p_x^2 u=\p_t^2 u+m^2 u+\alpha(\abs{u}^2)u
\in C^\infty\sb{\mathrm{b}}\big(\R,H^1(\R)\big)
$
implies that
%$u\in C^\infty\sb{\mathrm{b}}\big(\R,H^3(\R)\big)$,
%hence
$u
\in C^\infty\sb{\mathrm{b}}\big(\R,H^3(\R)\big)
\subset C^\infty\sb{\mathrm{b}}\big(\R,C^{2,a}(\R)\big)$,
with $a=1/2$.

For $n\ge 2$, the proof of \eqref{u-good} is by induction.
We start with
\eqref{u-tt-good} and use the Sobolev embedding:
\begin{eqnarray}\label{u-good-l-q}
\quad
\upsi\in C^\infty\sb{\mathrm{b}}\big(\R,H^1(\R^n)\big)
\subset C^\infty\sb{\mathrm{b}}\big(\R,L^q(\R^n)\big),
\quad
\begin{cases}
%%2\le q=\infty,&n=1;\\
\forall q\in[2,\infty),&n=2;
\\[1ex]
\forall q\in[2,2n/(n-2)],&n\ge 3.
\end{cases}
\end{eqnarray}
%% This already gives the required inclusion for $n=1$;
%% we now consider $n\ge 2$.
Fix $R>0$.
By \eqref{u-good-l-q},
we may assume that
for each $x_0\in\R^n$
one has
\begin{eqnarray}\label{assume-u-l-q}
\quad
\upsi\in C^\infty\sb{\mathrm{b}}\big(\R,L^q(\mathbb{B}^n_R(x_0))\big),
\quad
\mbox{with some}\quad
q\ge
q_0=\begin{cases}
2+4\kappa,&n=2,
\\[1ex]
2n/(n-2),&n\ge 3,
\end{cases}
\end{eqnarray}
with the norm independent of $x_0\in\R^n$.
Then, in the case when $\upsi$ is a solution to
the nonlinear Klein--Gordon equation
from \eqref{eqns} (for definiteness), one has
\begin{eqnarray}\label{delta-u-good}
\qquad
\Delta\upsi=\p_t^2\upsi+m^2\upsi+\alpha(\abs{\upsi}^2)\upsi
\in C^\infty\sb{\mathrm{b}}\big(\R,L^P(\mathbb{B}^n_R(x_0))\big),
\quad
P:=\frac{q}{1+2\kappa},
\ \ n\ge 2,
%%\begin{cases}
%%1\le P<\infty,&n=2,\\P=\frac{q}{1+2\kappa},&n\ge 3,
%%\end{cases}
\end{eqnarray}
with the norm independent of $x_0$.
Note that our assumptions on $q$
in \eqref{assume-u-l-q}
are such that
\begin{eqnarray}
P=\frac{q}{1+2\kappa}
\ge
\begin{cases}
(2+4\kappa)/(1+2\kappa)=2,&n=2;
\\[1ex]
\displaystyle
\frac{2n/(n-2)}{1+2\kappa}
>
\frac{2n/(n-2)}{1+4/(n-2)}
=\frac{2n}{n+2}>1
,&n\ge 3.
\end{cases}
\end{eqnarray}
Above, for $n\ge 3$, we used
Assumption~\ref{ass-kappa} on $\kappa$.
%% $P\ge 1$ for $n=2$,
%% and similarly for $n\ge 3$ one has
%% \begin{eqnarray}
%% P=\frac{q}{1+2\kappa}
%% >
%% \frac{2n/(n-2)}{1+4/(n-2)}
%% =\frac{2n}{n+2}>1,
%% \qquad
%% n\ge 3.
%% \end{eqnarray}
To arrive at
\eqref{delta-u-good},
we took into account the uniform bound on the time derivative \eqref{u-tt-good}
and the inclusions
$\upsi\in
C^\infty\sb{\mathrm{b}}\big(\R,L^q(\mathbb{B}^n_R(x_0))\big)
\subset
C^\infty\sb{\mathrm{b}}\big(\R,L^P(\mathbb{B}^n_R(x_0))\big)
$
and
$\alpha(\abs{\upsi}^2)\upsi
\in C^\infty\sb{\mathrm{b}}\big(\R,L^{\frac{q}{1+2\kappa}}
(\mathbb{B}^n_R(x_0))\big)$
(by Assumption~\ref{ass-kappa}),
with the norm independent of $x_0$.

%% We note that for $n\le 2$,
%% by \eqref{assume-u-l-q},
%% or rather
%% \eqref{delta-u-good},
%% \[
%% \frac{q}{1+2\kappa}
%% \]

\begin{remark}
If
$n\ge 3$ and $0<\kappa\le 1/(n-2)$,
then
$
P=\frac{q}{1+2\kappa}
\ge\frac{2n/(n-2)}{1+2/(n-2)}=2$
(similarly, if $n=2$, then $q=2+4\kappa$, $P=2$),
and we do not need to restrict the functions
to $\mathbb{B}^n_R(x_0)$:
indeed,
since $2\le P<q$,
one has
$\upsi\in
C^\infty\sb{\mathrm{b}}\big(\R,L^2(\R^n)\cap L^q(\R^n)\big)
\subset
C^\infty\sb{\mathrm{b}}\big(\R,L^P(\R^n)\big)
$,
hence
$
\abs{\alpha(\abs{\upsi}^2)\upsi}
\le
C(\abs{\upsi}+\abs{\upsi}^{1+2\kappa})
\in C^\infty\sb{\mathrm{b}}\big(\R,L^P(\R^n)\big),
$
and then
$
\Delta\upsi
=\p_t^2\upsi+m^2\upsi+\alpha(\abs{\upsi}^2)\upsi
\in C^\infty\sb{\mathrm{b}}\big(\R,L^P(\R^n)\big).
$
\end{remark}

By \eqref{u-good-l-q} and \eqref{delta-u-good},
both $u$ and $\Delta u$ belong to
$C^\infty\sb{\mathrm{b}}(\R,L^P(\mathbb{B}^n_R(x_0)))$,
with
$P=q/(1+2\kappa)$,
with the seminorms
$\sup_{t\in\R}\norm{\p_t^j u}_{L^p(\mathbb{B}^n_R(x_0))}$
and $\sup_{t\in\R}\norm{\p_t^j \Delta u}_{L^p(\mathbb{B}^n_R(x_0))}$,
$j\in\N_0$,
dependent on
$\sup_{t\in\R}\norm{\upsi(t)}_{H^1(\R^n)}$
(and fixed $R>0$), but not on $x_0\in\R^n$,
hence
\[
\upsi\in
C^\infty\sb{\mathrm{b}}\big(\R,H^{2,P}(\mathbb{B}^n_R(x_0))\big)
\cong
C^\infty\sb{\mathrm{b}}\big(\R,W^{2,P}(\mathbb{B}^n_R(x_0))\big),
\quad
P=q/(1+2\kappa)\in[1,+\infty).
\]
The Sobolev embedding gives
\begin{eqnarray}\label{u-l-q}
\upsi\in C^\infty\sb{\mathrm{b}}\big(\R,W^{2,P}(\mathbb{B}^n_R(x_0))\big)
\subset
C^\infty\sb{\mathrm{b}}\big(\R,L^Q(\mathbb{B}^n_R(x_0))\big),
\end{eqnarray}
where we can choose any $Q\in[1,+\infty]$
if $\frac{1}{P}<\frac{2}{n}$.
If, on the contrary,
$\frac{1}{P}\ge\frac{2}{n}$,
then
we can take any $Q\in[1,+\infty)$ satisfying
$
%%\frac{1+2\kappa}{q}
\frac{1}{P}
-\frac{1}{Q}\le \frac{2}{n};
$
we choose $Q$ such that
\begin{eqnarray}\label{q-q}
\frac{1}{Q}=
\frac{1}{P}-\frac{2}{n},
\qquad
\mbox{hence}
\qquad
\frac{1}{Q}=
\frac{1}{q}+\frac{2\kappa}{q}-\frac{2}{n}.
\end{eqnarray}
One can see from \eqref{q-q}
that the inclusion
$\upsi
\in C^\infty\sb{\mathrm{b}}\big(\R,L^Q(\mathbb{B}^n_R(x_0))\big)$
is a strict improvement over 
\eqref{assume-u-l-q}:
\begin{eqnarray}\label{qq}
\frac{1}{q}-\frac{1}{Q}
=\frac{2}{n}-\frac{2\kappa}{q}
\ge
\frac{2}{n}-\frac{2\kappa}{q_0}
=\begin{cases}
1-\frac{2\kappa}{2+4\kappa}\ge \frac 1 2,
&n=2,
\\[1ex]
\frac{2}{n}-\frac{n-2}{n}\kappa>0,
&n\ge 3,
\end{cases}
\end{eqnarray}
with $q_0$ defined in \eqref{assume-u-l-q}.
We note that
the right-hand side of \eqref{qq}
in the case $n\ge 3$
is strictly positive
due to the condition on $\kappa$ from
Assumption~\ref{ass-kappa}.
Now we can return to \eqref{assume-u-l-q}
with $q_{\mathrm{new}}=Q>q$ instead of $q$
and proceed by induction.
Since the right-hand side of \eqref{qq}
does not depend on $q$,
in finitely many steps of the induction argument
we arrive at
$1/P=(1+2\kappa)/q<2/n$,
and then
in \eqref{u-l-q}
we can choose an arbitrary value $Q\in[1,+\infty]$.
Thus,
$\upsi\in C^\infty\sb{\mathrm{b}}\big(\R,L^\infty(\mathbb{B}^n_R(x_0))\big)$,
uniformly in $x_0\in\R^n$,
and hence
$\upsi\in C^\infty\sb{\mathrm{b}}\big(\R,L^\infty(\R^n)\big)$,
which we now interpolate with
\eqref{u-tt-good}.
The inclusion
$\upsi\in C^\infty\sb{\mathrm{b}}\big(\R,L^Q(\R^n)\big)$,
$Q\in[2,+\infty]$,
leads to
\begin{eqnarray}\label{F-good}
\quad
\Delta\upsi=\p_t^2\upsi+m^2\upsi
+\alpha(\abs{\upsi}^2)\upsi
\in C^\infty\sb{\mathrm{b}}\big(\R,L^Q(\R^n)\big),
\qquad
\forall Q\in[2,+\infty],
\end{eqnarray}
hence
$\upsi
\in C^\infty\sb{\mathrm{b}}\big(\R,W^{2,Q}(\R^n)\big)$,
$\forall Q\in[2,+\infty]$,
and by the Sobolev embedding theorem
this leads to
$\upsi\in C^\infty\sb{\mathrm{b}}\big(\R,C^{1,a}(\R^n)\big)$,
with any $a\in(0,1)$.
\end{proof}

\subsection{Reduction to one frequency}

Now we can prove the main result
about the absence of nontrivial solutions
with compact spectrum.
%under rather generic assumptions,
%the only type of solutions with compact spectrum
%is the one-frequency solitary waves.

\begin{theorem}
\label{theorem-one-frequency}
Let $n\in\N$, $m\ge 0$,
and assume that
$\alpha(\cdot)$
satisfies
both
Assumption~\ref{ass-kappa}
and 
Assumption~\ref{ass-algebraic}.
Assume that $\upsi\in L^\infty(\R,H^1(\R^n))$
is a solution to the nonlinear Schr\"odinger or Klein--Gordon equation
\eqref{eqns}.
%% \[
%% -\ddot\upsi=-\Delta\upsi+m^2\upsi-g(\abs{\upsi}^2)\upsi.
%% \]
If there is a bounded interval $I\subset\R$
such that $\supp\tilde\upsi\subset\R^n\times I$,
with $\tilde\upsi(x,\omega)$ the Fourier transform of $\upsi$
with respect to time,
then
\[
\upsi(x,t)=\phi_0(x)e^{-\jj\omega_0 t},
\qquad
\mbox{
with some \ $\phi_0\in H^1(\R^n)$ \ and \ $\omega_0\in\R$}.
\]
\end{theorem}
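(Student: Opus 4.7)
The plan is to combine the classical (pointwise-in-$x$) Titchmarsh convolution theorem with the algebraic relation from Assumption~\ref{ass-algebraic}, and then to reduce the resulting pointwise single-frequency structure to a single constant $\omega_0$ via elliptic unique continuation.

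First I would invoke Theorem~\ref{theorem-better-regularity} to upgrade $u$ to $C^\infty_{\mathrm{b}}(\R,L^\infty(\R^n)\cap C^{1,a}(\R^n))$, so that $u(x,\cdot)$ is a bounded continuous function of $t$ for every fixed $x$ and $\tilde u(x,\cdot)\in\mathscr{E}'(\R)$ is amenable to the classical pointwise Titchmarsh theorem. Write $a(x)=\aa{\tilde u}(x)$, $b(x)=\bb{\tilde u}(x)$, and $\Delta(x)=b(x)-a(x)\ge 0$. The equation \eqref{eqns} rearranges so that $\Psi:=\alpha(\abs{u}^2)u$ is a linear combination of $u$ and its space-time derivatives, and therefore its time-spectrum is pointwise contained in that of $u$. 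Setting $v:=\bar u\,\Psi=\abs{u}^2\alpha(\abs{u}^2)$, classical pointwise Titchmarsh yields $\sup\supp\widetilde{\abs{u}^{2k}}(x,\cdot)=k\Delta(x)$ and $\sup\supp\tilde v(x,\cdot)\le\Delta(x)$ at every $x\in\Sigma_{\tilde u}$, hence $\sup\supp\widetilde{M_j(\abs{u}^2)v^j}(x,\cdot)\le(\deg M_j+j)\Delta(x)$.

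Next I would plug these bounds into the identity $\sum_{j=0}^{J}M_j(\abs{u}^2)v^j\equiv 0$ provided by Assumption~\ref{ass-algebraic}. Fix $x_0\in\Sigma_{\tilde u}$ and argue by contradiction, assuming $\Delta(x_0)>0$. Since the leading coefficient of $M_0$ is nonzero and the top frequency $d_0\Delta(x_0)$ of $\widetilde{\abs{u}^{2d_0}}(x_0,\cdot)$ (with $d_0=\deg M_0$) strictly exceeds the tops $k\Delta(x_0)$ of $\widetilde{\abs{u}^{2k}}(x_0,\cdot)$ for $k<d_0$, no cancellation near the top frequency is possible and $\sup\supp\widetilde{M_0(\abs{u}^2)}(x_0,\cdot)=d_0\Delta(x_0)$. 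On the other hand, \eqref{q-k-j} gives $\sup\supp\widetilde{M_j(\abs{u}^2)v^j}(x_0,\cdot)<d_0\Delta(x_0)$ for every $j\ge 1$, so that the identity $M_0(\abs{u}^2)=-\sum_{j\ge 1}M_j(\abs{u}^2)v^j$ is violated. Hence $\Delta\equiv 0$ on $\Sigma_{\tilde u}$; that is, $\tilde u(x,\cdot)$ is supported at a single point $\omega(x)$, and the coincidence of the lower-semicontinuous $a$ and upper-semicontinuous $b$ makes $\omega$ continuous on $\Sigma_{\tilde u}$.

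Finally, $u\in L^\infty(\R,L^\infty(\R^n))$ excludes higher-order distributional derivatives of $\delta(\omega-\omega(x))$, forcing $u(x,t)=\phi(x)e^{-\jj\omega(x)t}$ for some $\phi\in H^1(\R^n)$. Substituting into \eqref{eqns} and matching powers of $t$ produces both a linear profile equation $-\Delta\phi+V(x)\phi=0$ with bounded potential $V$ and the relation $\phi\abs{\nabla\omega}^2=0$, so $\omega$ is locally constant on $\{\phi\ne 0\}$. Elliptic unique continuation for $-\Delta+V$ with $V\in L^\infty$ forces $\phi\equiv 0$ or $\{\phi\ne 0\}$ dense in $\R^n$; in the nontrivial case, continuity of $\omega$ on the connected ambient space $\R^n$ makes $\omega\equiv\omega_0\in\R$, yielding the required single-frequency form.

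The step I expect to be most delicate is the non-cancellation argument used to establish $\sup\supp\widetilde{M_0(\abs{u}^2)}(x_0,\cdot)=d_0\Delta(x_0)$: it must be carried out strictly pointwise in $x_0$, using classical Titchmarsh together with the hypothesis $d_0>\deg M_j+j$ for $j\ge 1$, so that the nonvanishing leading coefficient of $M_0$ produces an irreducible top-frequency contribution not cancelled by the lower-order terms. Working at the pointwise level—made available by the H\"older regularity from Theorem~\ref{theorem-better-regularity}—rather than through the semicontinuous-envelope version in Theorem~\ref{theorem-partial} is exactly what keeps the bookkeeping clean.
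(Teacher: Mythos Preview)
Your overall strategy is correct and is in fact somewhat cleaner than the paper's. Both proofs start from Theorem~\ref{theorem-better-regularity} and feed the algebraic identity $\sum_j M_j(\abs{u}^2)(\abs{u}^2\alpha(\abs{u}^2))^j\equiv 0$ into a Titchmarsh argument. The difference is that the paper works with the semicontinuous envelopes of Theorem~\ref{theorem-partial}, first deducing that $\abs{u}^2\alpha(\abs{u}^2)$ is time-independent, then that $\abs{u}^2$ is (via the finitely-many-roots argument), and only then obtains $\bb{\tilde u}^L=\aa{\tilde u}$. You instead exploit the $C^{1,a}$ regularity to apply classical Titchmarsh pointwise in $x$ and conclude directly that $\bb{\tilde u}(x)=\aa{\tilde u}(x)$ for every $x\in\Sigma_{\tilde u}$; this bypasses Lemma~\ref{lemma-now-lemma} and the root-counting step entirely and gives the ansatz $u(x,t)=\phi(x)e^{-\jj\omega(x)t}$ in one stroke.

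There is, however, a genuine gap in your last paragraph. Two issues: (i) the ``matching powers of $t$'' step computes $\Delta(\phi e^{-\jj\omega(x)t})$, which requires $\omega\in C^2$ on $\{\phi\ne 0\}$; Theorem~\ref{theorem-better-regularity} gives only $C^{1,a}$ regularity, so this is not immediate. (ii) More seriously, $\omega$ is defined only on $\Sigma_{\tilde u}=\{\phi\ne 0\}$, and while unique continuation makes this set dense, dense open subsets of $\R^n$ need not be connected (think of $\R^n$ minus a hyperplane), so ``continuity of $\omega$ on the connected ambient space $\R^n$'' does not by itself force $\omega$ to be constant. The fix is easy and is exactly the paper's endgame: once you have $u=\phi e^{-\jj\omega(x)t}$, the quantity $\abs{u}^2=\abs{\phi}^2$ is automatically time-independent, the equation becomes linear with bounded potential $V=\alpha(\abs{\phi}^2)$, and for any test function $\rho\in C^\infty_{\mathrm{comp}}(\R)$ the pairing $g_\rho(x)=\langle\rho,\tilde u(x,\cdot)\rangle=2\pi\phi(x)\rho(\omega(x))$ satisfies $-\Delta g_\rho+(m^2+V-\omega(x)^2)g_\rho=0$. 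Choosing $\rho$ supported where $\omega$ does not take a given value shows via unique continuation that $\omega$ is constant on $\Sigma_{\tilde u}$, without any differentiability of $\omega$ or connectedness assumption.
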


Note that, in particular, the above theorem applies to finite energy solutions
of the nonlinear Klein--Gordon equation from
%% Theorem~\ref{theorem-kg-gwp}.
\cite[Proposition 2.1]{MR843591}.

\begin{proof}
The proof closely follows
that of \cite[Theorem 6]{partial-convolution},
simplified
in view of Theorem~\ref{theorem-better-regularity};
we provide the proof, shortening the repeating parts.
%The proof for the nonlinear Schr\"odinger equation
%and the nonlinear Klein--Gordon equation is the same;
%for definiteness, we consider the latter case.
Assume that $\upsi\in L^\infty(\R,H^1(\R^n))$
is a solution to
the nonlinear Klein--Gordon equation from
\eqref{eqns}
with compact spectrum,
so that the Fourier transform of $\upsi$ in time,
\[
\tilde\upsi(x,\omega)
=
\int\sb{\R}\upsi(x,t)e^{\jj\omega t}\,dt,
\qquad
\tilde\upsi\in\mathscr{E}'(\R,H^1(\R^n)),
\]
satisfies
$\supp\tilde\upsi\subset\R^n\times [a,b]$,
with some $a,\,b\in\R$, $a<b$.
We denote
\begin{eqnarray}\label{def-sigma-psi}
\Sigma:=\Sigma_{\tilde\upsi}
&=&
\big\{
x\in\R^n\sothat
(\{x\}\times\R)\cap\supp\upsi\ne\emptyset
\big\}
\\
\nonumber
&=&
\big\{
x\in\R^n\sothat
\big(\{x\}\times\R\big)\cap\supp\tilde\upsi\ne\emptyset
\big\}
%%=\cup\sb{j\in\N}\varSigma_j,
\end{eqnarray}
to be the projection of the support of $\upsi$
onto $\R^n$.
Then,
since
%%$\bb{\tilde\upsi}\at{\Sigma}>-\infty$ while
$\supp\tilde\upsi\subset\R^n\times[a,b]$,
\[
\bb{\tilde\upsi}\at{\Sigma}\ge a
\quad\Rightarrow\quad
\bb{\tilde\upsi}^L\at{\Sigma\setminus\p\Sigma}\ge a;
\qquad
\aa{\tilde\upsi}\at{\Sigma}\le b
\quad\Rightarrow\quad
\aa{\tilde\upsi}^U\at{\Sigma\setminus\p\Sigma}\le b.
\]
%% where $\varSigma_j$ are disjoint closed sets.

\begin{lemma}\label{lemma-b-l-a}
$\alpha(\abs{\upsi(x,t)}^2)$ and $\abs{\upsi(x,t)}$
do not depend on time,
and moreover
\[
\bb{\tilde\upsi}^L=\aa{\tilde\upsi},
\qquad
\bb{\tilde\upsi}=\aa{\tilde\upsi}^U,
\qquad
\forall x\in\Sigma.
\]
\end{lemma}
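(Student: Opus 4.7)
The plan is to combine the algebraic relation from Assumption~\ref{ass-algebraic} with the partial-convolution Titchmarsh theorem (Theorem~\ref{theorem-partial}). By Theorem~\ref{theorem-better-regularity} we have $\upsi\in C^\infty\sb{\mathrm{b}}(\R,C^{1,a}(\R^n))$, so $\upsi$, $\bar\upsi$, $\abs{\upsi}^2$, each $M_j(\abs{\upsi}^2)$, and $w(\abs{\upsi}^2):=\abs{\upsi}^2\alpha(\abs{\upsi}^2)$ all belong to $\mathscr{E}'(\R,C(\R^n))$ with compact $\omega$-spectrum, making Theorem~\ref{theorem-partial} directly applicable.

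First I would bound the $\omega$-spectrum of $w(\abs{\upsi}^2)$ by that of $\abs{\upsi}^2$. Using the equation itself to substitute $\alpha(\abs{\upsi}^2)\upsi=\mathcal{L}\upsi$, where $\mathcal{L}$ is the local differential operator $\Delta-m^2-\p_t^2$ for NLKG (and analogously $\jj\p_t+\Delta$ for NLS), and multiplying by $\bar\upsi$, one obtains $w(\abs{\upsi}^2)=\bar\upsi\cdot\mathcal{L}\upsi$. Since $\mathcal{L}$ does not enlarge the $\omega$-support, $\bb{\mathcal{L}\upsi}^L\le\bb{\tilde\upsi}^L$ and $\aa{\mathcal{L}\upsi}^U\ge\aa{\tilde\upsi}^U$; Theorem~\ref{theorem-partial} then yields
\[
\bb{w(\abs{\upsi}^2)}^L\le\bb{\abs{\upsi}^2}^L,\qquad \aa{w(\abs{\upsi}^2)}^U\ge\aa{\abs{\upsi}^2}^U.
\]

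Next I would exploit the algebraic identity $M_0(\abs{\upsi}^2)=-\sum_{j=1}^J M_j(\abs{\upsi}^2)\,w(\abs{\upsi}^2)^j$. Iterating Theorem~\ref{theorem-partial} gives $\bb{\abs{\upsi}^{2k}}^L=k\cdot\bb{\abs{\upsi}^2}^L$, and whenever $\bb{\abs{\upsi}^2}^L>0$ the top-degree monomial of any polynomial $P$ in $\abs{\upsi}^2$ strictly dominates the lower-order terms in $\bb^L$, so $\bb{P(\abs{\upsi}^2)}^L=\deg P\cdot\bb{\abs{\upsi}^2}^L$ with no cancellation. Applying this to each $M_j$ and combining with the previous step, the identity forces
\[
\deg M_0\cdot\bb{\abs{\upsi}^2}^L\le\max_{1\le j\le J}(\deg M_j+j)\,\bb{\abs{\upsi}^2}^L,
\]
and the strict degree inequality~\eqref{q-k-j} then yields $\bb{\abs{\upsi}^2}^L\le 0$; the mirror argument gives $\aa{\abs{\upsi}^2}^U\ge 0$. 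The matching reverse inequalities come from non-negativity: $\abs{\upsi(x,\cdot)}^2\ge 0$ is continuous, bounded and has compact spectrum, hence is Bohr almost-periodic with strictly positive mean wherever it does not vanish identically, placing $\omega=0$ in $\supp\widetilde{\abs{\upsi}^2}(x,\cdot)$, so that $\bb{\abs{\upsi}^2}(x)\ge 0\ge\aa{\abs{\upsi}^2}(x)$ on $\Sigma$. Combining, $\bb{\abs{\upsi}^2}^L=\aa{\abs{\upsi}^2}^U=0$ on the interior of $\Sigma$.

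Finally I would promote this semicontinuous bound to the pointwise statement that $\abs{\upsi(x,t)}^2=\phi(x)^2$ is independent of $t$: the set $\{x:\abs{\upsi(x,\cdot)}^2\text{ depends on }t\}$ is open by continuity of $\upsi$ in $x$, so its non-emptiness would contradict the bound $\bb{\abs{\upsi}^2}^L\le 0$ by a neighborhood argument. Consequently $\alpha(\abs{\upsi}^2)$ is also independent of $t$. On $\{\phi>0\}$ the ratio $g_x(t):=\upsi(x,t)/\phi(x)$ is a unimodular function of $t$ with compact spectrum; applying Titchmarsh to $\abs{g_x}^2\equiv 1$ collapses $\supp\tilde g_x$ to a single point $\{-\omega(x)\}$, whence $\aa{\tilde\upsi}(x)=\bb{\tilde\upsi}(x)=-\omega(x)$ on $\{\phi>0\}$. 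The identities $\bb{\tilde\upsi}^L=\aa{\tilde\upsi}$ and $\bb{\tilde\upsi}=\aa{\tilde\upsi}^U$ on $\Sigma=\overline{\{\phi>0\}}$ then follow by passing to semicontinuous closures from $\{\phi>0\}$ to its boundary. The principal obstacle I anticipate is precisely this last upgrade from the semicontinuous inequality $\bb{\abs{\upsi}^2}^L\le 0$ to pointwise time-independence, which must be handled carefully at boundary points of $\Sigma$ using continuity of $\upsi$ together with the Bohr-mean positivity.
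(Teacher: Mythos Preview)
Your proof is correct and follows essentially the same strategy as the paper: both combine the algebraic relation from Assumption~\ref{ass-algebraic} with Theorem~\ref{theorem-partial}, using the degree inequality~\eqref{q-k-j} to force the spectrum of $\abs{\upsi}^2$ (or of $w=\abs{\upsi}^2\alpha(\abs{\upsi}^2)$) down to $\{0\}$. The organization differs in two minor respects.

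First, you argue directly that $\bb{\widetilde{\abs{\upsi}^2}}^L\le 0$ and then use reality/positivity to get $\supp\widetilde{\abs{\upsi}^2}\subset\R^n\times\{0\}$. The paper instead first shows $\bb{\tilde w}^L\le 0$ (this is the content of Lemma~\ref{lemma-small-supp} together with the same degree count you perform), deduces that $w$ is time-independent (Lemma~\ref{lemma-now-lemma}), and then uses the pointwise polynomial identity $\mathcal{M}(\tau,C)=0$ with~\eqref{q-k-j} to conclude that $\abs{\upsi}^2$, being a continuous function taking values in a finite root set, is time-independent. Your route is shorter here and avoids the root argument; the Bohr-mean observation is a pleasant extra but unnecessary, since reality of $\abs{\upsi}^2$ already forces $\inf\supp=-\sup\supp$.

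Second, for the final identity $\bb{\tilde\upsi}^L=\aa{\tilde\upsi}$ you pass through the pointwise unimodular factor $g_x=\upsi(x,\cdot)/\phi(x)$ on $\{\phi>0\}$ and then worry about extending to $\partial\Sigma$. The paper avoids this entirely: once $\supp\widetilde{\abs{\upsi}^2}\subset\R^n\times\{0\}$, a single application of Theorem~\ref{theorem-partial} to $\widetilde{\abs{\upsi}^2}=\tilde\upsi^\sharp\astomega\tilde\upsi$ gives $0\ge\bb{\tilde\upsi}^L+\bb{\tilde\upsi^\sharp}=\bb{\tilde\upsi}^L-\aa{\tilde\upsi}$ on $\Sigma$, and Lemma~\ref{lemma-a-a-b} gives the reverse inequality. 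This is cleaner and dissolves precisely the boundary obstacle you flagged.
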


\begin{proof}
Theorem~\ref{theorem-better-regularity}
and \eqref{kappa-such}
show that the function
\[
v(x,t):=\alpha(\abs{\upsi(x,t)}^2)
\]
satisfies
%lead to
\begin{eqnarray}\label{g-is-lq}
v\in C\sb{\mathrm{b}}^\infty\big(\R,C\sb{\mathrm{b}}(\R^n,\R)\big).
%\qquad
%v(x,t):=\alpha(\abs{\upsi(x,t)}^2).
\end{eqnarray}
By \eqref{eqns} and \eqref{g-is-lq},
\begin{eqnarray}\label{w-is-l-q}
(\p_t^2-\Delta+m^2)\upsi
=-\alpha(\abs{\upsi}^2)\upsi\in C\sb{\mathrm{b}}^\infty\big(\R,C\sb{\mathrm{b}}(\R^n)
\big).
\end{eqnarray}
%% with any $Q\in[2,+\infty]$.
%% \ac{Simply $Q=\infty???$}
We apply the Fourier transform to \eqref{w-is-l-q};
denoting by
$\tilde v(x,\omega)$
the Fourier transform of $v(x,t):=\alpha(\abs{\upsi(x,t)}^2)$
in time, one has
\begin{eqnarray}\label{bar-psi-psi}
(m^2-\omega^2-\Delta)\tilde\upsi
=-\tilde v\astomega\tilde\upsi.
\end{eqnarray}
Multiplying \eqref{w-is-l-q}
by $\bar\upsi$,
we have:
%\begin{eqnarray}\label{bar-psi-psi}
\begin{eqnarray}\label{psi-psi}
\bar\upsi(m^2+\p_t^2-\Delta)\upsi
=-\abs{\upsi}^2 \alpha(\abs{\upsi}^2)
\in C\sb{\mathrm{b}}^\infty\big(\R,C\sb{\mathrm{b}}(\R^n)\big).
\end{eqnarray}
Let $\mathcal{M}$ be as in Assumption~\ref{ass-algebraic}.
Applying $\mathcal{M}(\abs{\upsi}^2,\cdot)$
to both sides of the relation \eqref{psi-psi}
leads to
\begin{eqnarray}\label{leads-to}
0=\mathcal{M}
\big(
\abs{\upsi}^2,
\abs{\upsi}^2 \alpha(\abs{\upsi}^2)
\big)
&=&
\mathcal{M}
\big(
\abs{\upsi}^2,
-\bar\upsi(m^2-\omega^2-\Delta)\upsi
\big)
\\
\nonumber
&=&
\sum\sb{j=0}\sp J
M_j(\abs{\upsi}^2)
\big(
-\bar\upsi(m^2+\p_t^2-\Delta)\upsi
\big)^j.
\end{eqnarray}
We note that
$\tilde\upsi\sp\sharp\astomega\tilde\upsi=\widetilde{\abs{\upsi}^2}$,
where
\begin{eqnarray}\label{def-sharp}
f\sp\sharp(x,\omega)=\overline{f(x,-\omega)}.
\end{eqnarray}

%% \begin{lemma}\label{lemma-missing}
%% \ac{missing piece, better prove this nicely:}
%% \[
%% \supp
%% (\tilde\upsi\sp\sharp\astomega(m^2-\omega^2-\Delta)\tilde\upsi)
%% \subset
%% \supp(\tilde\upsi\sp\sharp\astomega\tilde\upsi).
%% \]
%% \end{lemma}

\begin{lemma}\label{lemma-small-supp}
$
\ \bb{\tilde\upsi\sp\sharp\astomega(m^2-\omega^2-\Delta)\tilde\upsi}(x)
\le
\bb{\tilde\upsi\sp\sharp\astomega\tilde\upsi}(x)
\ $
$\forall x\in\R^n$.
\end{lemma}

\begin{proof}
Since
$
\supp
(m^2-\omega^2-\Delta)\tilde\upsi
\subset
\supp
\tilde\upsi,
$
there is the inequality
\begin{eqnarray}\label{bb}
\bb{(m^2-\omega^2-\Delta)\tilde\upsi}(x)
\le
\bb{\tilde\upsi}(x),
\qquad
\forall x\in\R^n.
\end{eqnarray}
Therefore,
applying twice the Titchmarsh theorem for partial convolution
(Theorem~\ref{theorem-partial})
and \eqref{bb},
we derive:
\begin{eqnarray*}
\bb{\tilde\upsi\sp\sharp\astomega\tilde\upsi}
&=&
\big(\bb{\tilde\upsi\sp\sharp}^L+\bb{\tilde\upsi}\big)^U
%% \ge
%% \big(
%% \bb{\tilde\upsi\sp\sharp}^L+\bb{(m^2-\omega^2-\Delta)\tilde\upsi}\big)^U
\ge
\big(
\bb{\tilde\upsi\sp\sharp}^L+\bb{(m^2-\omega^2-\Delta)\tilde\upsi}^L\big)^U
\\[1ex]
&=&
(\bb{\tilde\upsi\sp\sharp\astomega(m^2-\omega^2-\Delta)\tilde\upsi}^L)^U
=
\bb{\tilde\upsi\sp\sharp\astomega(m^2-\omega^2-\Delta)\tilde\upsi}.
%\qedhere
\end{eqnarray*}
For the last equality, we used Lemma~\ref{lemma-a-a-b}
\end{proof}

Now we apply Theorem~\ref{theorem-partial} to the Fourier transform
(in time)
of the relation \eqref{leads-to}
and use Assumption~\ref{ass-algebraic},
getting
$
\bb{\tilde\upsi\sp\sharp\astomega(m^2-\omega^2-\Delta)\tilde\upsi}^L
\le 0;
$
then
$\bb{\tilde\upsi\sp\sharp\astomega(m^2-\omega^2-\Delta)\tilde\upsi}\le 0$,
and similarly
$\aa{\tilde\upsi\sp\sharp\astomega(m^2-\omega^2-\Delta)\tilde\upsi}\ge 0$.
It follows that
\begin{eqnarray}\label{i-f-t}
\supp\tilde\upsi\sp\sharp\astomega(m^2-\omega^2-\Delta)\tilde\upsi\subset\R^n\times\{0\}.
\end{eqnarray}

\begin{lemma}
\label{lemma-now-lemma}
$\abs{\upsi}^2 \alpha(\abs{\upsi}^2)$ is time-independent.
\end{lemma}

\begin{proof}
By \eqref{i-f-t},
the function
$
G(x,t):=\abs{\upsi}^2 \alpha(\abs{\upsi}^2)
=\bar u(m^2+\p_t^2-\Delta)u
$
satisfies
$
\supp \tilde G(x,\omega)\subset\R^n\times\{0\}.
$
We conclude that
\begin{eqnarray}\label{g-powers}
\tilde G(x,\omega)=\sum_{j\in\N\sb{0}}\delta^{(j)}(\omega) G_j(x),
\qquad
x\in\R^n,
\quad \omega\in\R.
\end{eqnarray}
Above,
in agreement with the general theory of distributions
\cite{MR717035},
the summation in $j\in\N\sb{0}$ is locally finite:
for each compact subset $K\subset\R^n$,
%and $x\in K$,
there are finitely many terms $J(K)\in\N$ in the
restriction of \eqref{g-powers} onto $K$
(cf. \cite[Theorem 2.3.5]{MR717035}).
Moreover, the terms with derivatives of $\delta(\omega)$
should not appear in \eqref{g-powers}.
Indeed, the Fourier transform
of \eqref{g-powers} coupled with a test function
$\varphi\in\mathscr{D}(K)$
is
\begin{eqnarray}\label{g-powers-t}
\langle\varphi,G(\cdot,t)\rangle=
\frac{1}{2\pi}
\sum_{j=0}^{J(K)} (\jj t)^j
\langle\varphi, G_j\rangle,
\qquad
x\in K,
\quad t\in\R;
\end{eqnarray}
if $G_j\ne 0$ for some $1\le j\le J(K)$,
then for some nonzero test function
the right-hand side of \eqref{g-powers-t}
would be growing in time,
in contradiction to \eqref{g-is-lq}.
This implies that in \eqref{g-powers} the only nonzero term is the one
with $j=0$.
Thus, $G(x,t)=\abs{\upsi(x,t)}^2\alpha(\abs{\upsi(x,t)}^2)=G_0(x)$ does not depend on time.
%%hence, by the argument after \eqref{g-powers},
\end{proof}

Let us argue that
since
by Lemma~\ref{lemma-now-lemma}
the expression $\abs{\upsi}^2 \alpha(\abs{\upsi}^2)$ is time-inde\-pendent,
so is $\abs{\upsi}^2$.
%We note that
%$\tau \alpha(\tau)$ is a nonconstant function of $\tau$.
%We can not have $\tau\alpha(\tau)=0$ for all $\tau\ge 0$
%since $\alpha(\tau)$ is a nonconstant function.
Since $\tau \alpha(\tau)\equiv C\in\R$,
we substitute this value into $\mathcal{M}$,
arriving at
\begin{eqnarray}\label{zero-is-m}
0
\equiv\mathcal{M}\big(\tau,\tau\alpha(\tau)\big)
=\mathcal{M}\big(\tau,C\big)
=\sum\sb{j=0}^J M_j(\tau)C^j.
\end{eqnarray}
Due to the conditions
\eqref{q-k-j},
the right-hand side of \eqref{zero-is-m}
is a polynomial of degree $\deg M_0>0$;
thus,
for each $(x,t)\in\R^n\times\R$,
the value $\tau=\abs{\upsi(x,t)}^2$
has to be equal to one of the roots of
$\scrM(\tau,C)$;
due to the continuous dependence
of $\abs{\upsi(x,t)}^2$ of $x$ and $t$
(Theorem~\ref{theorem-better-regularity}),
it has to be the same root
for all $(x,t)\in\R^n\times\R$.
Thus,
$\abs{\upsi(x,t)}^2$ also does not depend on time,
resulting in
\begin{eqnarray}\label{zero-is-b}
\supp\widetilde{\abs{\upsi}^2}\subset\R^n\times\{0\}.
\end{eqnarray}
Using the above relation and
applying Theorem~\ref{theorem-partial}
to
$\widetilde{\abs{\upsi}^2}=
\tilde\upsi\sp\sharp\astomega\tilde\upsi$
(with $\tilde\upsi\sp\sharp$
defined according to \eqref{def-sharp}),
we conclude that
\[
0
=\bb{\widetilde{\abs{\upsi}^2}}(x)
\ge
\bb{\tilde\upsi}^L(x)+\bb{\tilde\upsi\sp\sharp}(x)
=\bb{\tilde\upsi}^L(x)-\aa{\tilde\upsi}(x),
\qquad
\forall x\in\Sigma.
\]
Thus,
$\bb{\tilde\upsi}^L\le\aa{\tilde\upsi}$
for all $x\in\Sigma$.
On the other hand, by Lemma~\ref{lemma-a-a-b},
$\bb{\tilde\upsi}^L\ge \aa{\tilde\upsi}$
for all $x\in\Sigma$.
We conclude that
\[
\bb{\tilde\upsi}^L=\aa{\tilde\upsi}
\quad
\mbox{and similarly}
\quad
\bb{\tilde\upsi}=\aa{\tilde\upsi}^U,
\qquad
\forall x\in\Sigma.
%\qedhere
\]
\end{proof}

The rest of the proof repeats
\cite{partial-convolution},
to which we refer for more details
and only give a short sketch.
By Lemma~\ref{lemma-b-l-a},
\[
V(x):=v(x,t)
=
\alpha(\abs{\upsi(x,t)}^2)
\quad
\mbox{does not depend on time;}
\qquad
\tilde v(x,\omega)=2\pi\delta(\omega)V(x).
\]
Then equation \eqref{w-is-l-q}
takes the form
\begin{eqnarray}\label{psi-tilde-ucp}
\Delta\tilde\upsi
=
m^2\tilde\upsi
-\omega^2\tilde\upsi
+V(x)\tilde\upsi,
\end{eqnarray}
where
$V(x)=\alpha(\abs{\upsi(x,t)}^2)$,
$V\in C\sb{\mathrm{b}}(\R^n,\R)$
by Theorem~\ref{theorem-better-regularity}.
Since $V(x)$ is sufficiently regular,
%satisfies conditions of Theorem~\ref{theorem-ucp},
one applies the unique continuation property
for the Laplace operator
(see e.g. \cite{MR1809741})
to an $L^2$-function
$\tilde\upsi$
(valued in $\mathscr{D}'(\Omega)$)
which solves
\eqref{psi-tilde-ucp},
concluding that the set $\Sigma_{\tilde\upsi}$
from \eqref{def-sigma-psi}
has to be the whole space
or else $\tilde\upsi$ is identically zero.
Again using the unique continuation property,
one proves the inclusions
\[
\supp\tilde\upsi\subset\R\times(-\infty,\inf\bb{\tilde\upsi}],
\qquad
\supp\tilde\upsi\subset\R\times[\sup\aa{\tilde\upsi},+\infty).
\]
By Lemma~\ref{lemma-b-l-a},
one has $\bb{\tilde\upsi}^L=\aa{\tilde\upsi}$,
and then it follows that
$\supp \tilde u\subset\R^n\times\{\omega_0\}$,
with some $\omega_0\in I$.
Similarly to the proof of Lemma~\ref{lemma-now-lemma},
this leads to $\upsi(x,t)=e^{-\jj\omega_0 t}\phi(x)$,
concluding the proof of Theorem~\ref{theorem-one-frequency}.
\end{proof}

%% \begin{remark}
%% Alternatively,
%% from the energy conservation
%% and the assumption $\psi\in H^1(\R^n)$,
%% we seem to have
%% \[
%% \psi\in L^{2k+2}(\R^n),
%% \qquad
%% k=\deg f.
%% \]
%% Then
%% $
%% g(\abs{\psi}^2)\in L^{\frac{2k+2}{2k}}(\R^n).
%% $
%% For the inclusion $v:=g(\abs{\psi}^2)\in L^{n/2}(\R^n)$
%% we seem to need
%% \[
%% \frac{2k+2}{2k}\ge p=\frac{n}{2},
%% \qquad
%% 2+\frac{2}{k}\ge n,
%% \qquad
%% \frac{2}{n-2}\ge k.
%% \]
%% \end{remark}

\appendix

\section{Multifrequency solitary waves of nonlinear Dirac equation}
\label{sect-multi-dirac}

Here we give an explicit construction
of multifrequency solitary waves for
the nonlinear Dirac equation with the scalar self-interaction,
known as the Soler model
\cite{jetp.8.260,PhysRevD.1.2766}:
\begin{eqnarray}\label{nld}
\jj\p_t\psi
=D_m\psi-f(\psi\sp\ast\beta\psi)\beta\psi,
\qquad
\psi(x,t)\in\C^4,
\quad
x\in\R^3,
\end{eqnarray}
where the free Dirac operator
$D_m:\,L^2(\R^3,\C^4)\to L^2(\R^3,\C^4)$
with domain $\mathfrak{D}(D_m)=H^1(\R^3,\C^4)$
is given by
\[
D_m=-\jj\bm\alpha\cdot\nabla+\beta m,
%=-\jj\begin{bmatrix}0&\sigma\cdot\nabla
%\\\sigma\cdot\nabla&0\end{bmatrix}+m\beta,
\]
with
%$\bm\alpha=(\alpha^1,\alpha^2,\alpha^3)$
%with
$\alpha^i=\begin{bmatrix}0&\sigma_i\\\sigma_i&0\end{bmatrix}$,
$\sigma_i$, $1\le i\le 3$ the Pauli matrices,
and
$\beta=\begin{bmatrix}I_{\C^2}&0\\0&-I_{\C^2}\end{bmatrix}$.

\begin{assumption}\label{ass-mono}
\begin{enumerate}
\item
\label{ass-mono-1}
There is
$V\in C^\infty(\R^3)$
which is positive, spherically symmetric, and strictly monotonically decreasing,
with $\lim\sb{\abs{x}\to\infty}V(x)=0$,
such that there are eigenfunctions of the Dirac operator
$D_m+\beta V$
corresponding to eigenvalues
$\omega_0$ and $\omega_1$,
with $0<\omega_0<\omega_1<m$:
\begin{eqnarray}\label{omega-phi}
\omega_j\phi_j=D_m\phi_j-\beta V\phi_j,
\qquad
j=0,\,1.
\end{eqnarray}
\item
\label{ass-mono-2}
The corresponding eigenfunctions have the form
\begin{eqnarray}\label{phi-j}
\quad
\phi_j(x)=\begin{bmatrix}v_j(r)\bm{n}_j\\\jj u_j(r)\sigma_r\bm{n}_j\end{bmatrix},
\qquad
\bm{n}_j\in\C^2,
\quad
\abs{\bm{n}_j}=1,
\quad
j=0,\,1,
\end{eqnarray}
with $r=\abs{x}$, $\sigma_r=r^{-1}x\cdot\sigma$ (for $x\ne 0$),
and with $v_j$, $u_j$ smooth (considered as functions of $x\in\R^3$)
and real-valued;
\item
\label{ass-mono-3}
The function $v_0$ is strictly positive;
\item
\label{ass-mono-4}
$\rho(r):=|v_0(r)|^2-|u_0(r)|^2$ is monotonically decreasing
in $r\ge 0$.
\end{enumerate}
\end{assumption}

Thus, $v_j$ and $u_j$ satisfy the system
\begin{eqnarray}\label{v-u-equations}
\begin{cases}
\omega_j v_j=\p_r u_j+\frac{n-1}{r}u_j+(m-V)v_j,
\\[1ex]
\omega_j u_j=-\p_r v_j-(m-V)u_j,
\end{cases}
\qquad
j=0,\,1.
\end{eqnarray}

\begin{remark}\label{remark-bif}
Let us sketch a construction of states
$\phi_j$
satisfying Assumption~\ref{ass-mono}.
We assume that there is 
a spherically symmetric potential $W(x)>0$
which we consider as a function of $r=\abs{x}$,
monotonically decreasing at infinity with
$\lim\sb{r\to\infty}W(r)\to 0$,
such that
$H=-\frac{1}{2m}\Delta-W$ has
a groundstate eigenvalue $E_0=-1$
and an excited state with eigenvalue $E_1=-1/2$,
with the corresponding eigenfunctions
$\varphi_0$ and $\varphi_1$:
\begin{eqnarray}\label{one-and-a-half}
\qquad
-\varphi_0=-\frac{1}{2m}\Delta\varphi_0-W\varphi_0,
\quad
-\frac 1 2\varphi_1=-\frac{1}{2m}\Delta\varphi_1-W\varphi_1,
\quad
\varphi_0,\,\varphi_1\in L^2(\R^n).
\end{eqnarray}
Since $\varphi_0$ is a groundstate,
we can assume that it is spherically symmetric,
strictly positive,
and monotonically decreasing to zero at infinity.
For $\omega\in(0,m)$, for $r\ge 0$, we define
\[
V(r)=(m-\omega)W\big((m-\omega)^{1/2}r\big),
\]
\[
\hat v_j(r)=\varphi_j\big((m-\omega)^{1/2}r\big),
\qquad
\hat u_j(r)=-\frac{\p_r\hat v_j(r)}{2m}
=-\frac{(m-\omega)^{1/2}}{2m}\varphi_j'\big((m-\omega)^{1/2}r\big),
\]
where $j=0,\,1$.
Then $(\hat v_0,\hat u_0)$ and $(\hat v_1,\hat u_1)$
satisfy the relations
\[
-(m-\omega)\hat v_0
=-\frac{\Delta \hat v_0}{2m}-V \hat v_0
=\p_r \hat u_0+\frac{n-1}{r}\hat u_0-V \hat v_0,
\]
\[
-\frac{m-\omega}{2}\hat v_1
=-\frac{\Delta \hat v_1}{2m}-V \hat v_1
=\p_r \hat u_1+\frac{n-1}{r}\hat u_1-V\hat v_1,
\]
hence
\[
\omega \hat v_0
=\p_r \hat u_0+\frac{n-1}{r}\hat u_0
+(m-V)\hat v_0,
\qquad
\frac{m+\omega} 2 \hat v_1
=\p_r \hat u_1+\frac{n-1}{r}\hat u_1
+(m-V)\hat v_1,
\]
which coincide with the first equation in \eqref{v-u-equations}
(with $\omega$ and $(m+\omega)/2$ in place of $\omega_j$).
The function $\varphi_0(r)$ is strictly monotonically decreasing,
while
$\mathop{\lim\sup}\limits\sb{r\to\infty}
\abs{\varphi_0'(r)/\varphi_0(r)}<\infty$
(one can show that $f(r)=\varphi_0'(r)/\varphi_0(r)$
satisfies
$f'(r)=1-W(r)-(n-1)f/r-f^2$, $r>1$,
$f(1)=\varphi_0'(1)/\varphi_0(1)<0$;
solutions to this equation
are either uniformly bounded from below or
approach $-\infty$
as $r\to r_0-0$, with some $r_0\in (1,\infty)$).
Therefore, the function
\begin{eqnarray}\label{rho-mono}
\qquad
\hat\rho(r)
:=|\hat v_0(r)|^2-|\hat u_0(r)|^2
%\\\nonumber
=
\big|\varphi_0((m-\omega)^{1/2}r)\big|^2
-\frac{m-\omega}{4m^2}\big|\varphi_0'((m-\omega)^{1/2}r)\big|^2
\end{eqnarray}
is also monotonically decreasing
as long as $\omega$ is sufficiently close to $m$.
The perturbation theory
allows one to start with
\[
\omega
\begin{bmatrix}\hat v_0\\\hat u_0\end{bmatrix}
=
\begin{bmatrix}m-V&\p_r+\frac{n-1}{r}
\\-\p_r&-2m+\omega\end{bmatrix}
\!
\begin{bmatrix}\hat v_0\\\hat u_0\end{bmatrix},
\quad
\frac{m+\omega}{2}
\begin{bmatrix}\hat v_1\\\hat u_1\end{bmatrix}
=
\begin{bmatrix}m-V&\p_r+\frac{n-1}{r}
\\-\p_r&-2m+\frac{m+\omega}{2}\end{bmatrix}
\!
\begin{bmatrix}\hat v_1\\\hat u_1\end{bmatrix}
\]
and to construct eigenfunctions
$(v_0,u_0)\in L^2(\R^n,\C^2)$
and
$(v_1,u_1)\in L^2(\R^n,\C^2)$
to
\eqref{v-u-equations},
\[
\omega_0
\begin{bmatrix}v_0\\u_0\end{bmatrix}
=
\begin{bmatrix}m-V&\p_r+\frac{n-1}{r}
\\-\p_r&-m+V\end{bmatrix}
\begin{bmatrix}v_0\\u_0\end{bmatrix},
\qquad
\omega_1
\begin{bmatrix}v_1\\u_1\end{bmatrix}
=
\begin{bmatrix}m-V&\p_r+\frac{n-1}{r}
\\-\p_r&-m+V\end{bmatrix}
\begin{bmatrix}v_1\\u_1\end{bmatrix}
\]
as bifurcations from 
$(\hat v_0,\hat u_0)$
and
$(\hat v_1,\hat u_1)$,
with corresponding eigenvalues
$\omega_0\approx\omega$
and $\omega_1\approx (m+\omega)/2$,
as long as $\omega\lesssim m$
is chosen sufficiently close to $m$.
(We note that the operators
in the right-hand sides are self-adjoint
in
$H^1_{\mathrm{even,\,odd}}(\R,|r|^{n-1}\,dr;\C^2)$,
the space
consisting of $\C^2$-valued $H^1$-functions
with respect to the measure $\abs{r}^{n-1}\,dr$
on $\R$,
with their
first component being even and the second
being odd as functions of $r\in\R$;
see e.g. \cite{boussaid2017nonrelativistic}.)
The analysis shows that
$(v_j,u_j)$ can also be chosen real.
Moreover, if $\omega$ is sufficiently close to $m$,
then
$\rho(r):=\abs{v_0(r)}^2-\abs{u_0(r)}^2$
(cf. \eqref{rho-mono})
is monotonically decreasing.
\end{remark}

Similarly to \eqref{omega-phi}, we have
\begin{eqnarray}\label{omega-phi-1}
-\omega_j\chi_j=D_m\chi_j-\beta V\chi_j,
\qquad
j=0,\,1,
\end{eqnarray}
with
\begin{eqnarray}\label{chi-j}
\chi_j(x)=\begin{bmatrix}-\jj u_j(r)\sigma_r\bm{m}_j\\v_j(r)\bm{m}_j\end{bmatrix},
\qquad
\bm{m}_j\in\C^2,
\qquad
\abs{\bm{m}_j}=1,
\qquad
j=0,\,1.
\end{eqnarray}
(These expressions for $\chi_j$
can be obtained from $\phi_j$
by applying to \eqref{phi-j}
the charge conjugation
operator $\jj\gamma^2\bm{K}$, with $\gamma^2=\beta\alpha^2
=\begin{bmatrix}0&\sigma_2\\-\sigma_2&0\end{bmatrix}$,
with
$\bm{K}:\,\C^N\to\C^N$ the complex conjugation;
for more details, see \cite{boussaid2018spectral} or \cite{opus}).
It follows that
for any $a_j,\,b_j\in\C$, $j=0,\,1$,
the function
\begin{eqnarray}\label{def-psi-a-b}
\quad
\psi(x,t)=
a_0\phi_0(x)e^{-\jj\omega_0 t}
+
a_1\phi_1(x)e^{-\jj\omega_1 t}
+
b_0\chi_0(x)e^{\jj\omega_0 t}
+
b_1\chi_1(x)e^{\jj\omega_1 t}
\end{eqnarray}
satisfies the linear Dirac equation
$
\jj\p_t\psi
=D_m\psi-\beta V\psi.
$

We note that
$\phi_j$ and $\chi_j$, $j=0,\,1$,
defined in \eqref{phi-j} and \eqref{chi-j},
satisfy
\[
\chi_i^*\beta\phi_j
=\phi_i^*\beta\chi_j=0,
\qquad
i,\,j=0,\,1,
\]
for any choice of $\bm{n}_j$, $\bm{m}_j$.
%%, with $j=0,\,1$.
We choose $\bm{n}_0,\,\bm{n}_1\in\C^2$
such that
$\abs{\bm{n}_0}=\abs{\bm{n}_1}=1$,
$\bm{n}_0\sp\ast\bm{n}_1=0$;
similarly,
we choose $\bm{m}_0,\,\bm{m}_1\in\C^2$
such that
$\abs{\bm{m}_0}=\abs{\bm{m}_1}=1$,
$\bm{m}_0\sp\ast\bm{m}_1=0$;
then
\[
\phi_i^*\beta\phi_j=0,
\qquad
\chi_i^*\beta\chi_j=0,
\qquad
i,\,j=0,\,1,
\ \quad
%\mbox{ as long as } 
i\ne j.
\]
Taking into account the relations
\[
\phi_j^*\beta\phi_j=|v_j(r)|^2-|u_j(r)|^2
\quad
\mbox{and}
\quad
\chi_j^*\beta\chi_j=|u_j(r)|^2-|v_j(r)|^2,
\qquad
j=0,\,1,
\]
we derive:
\begin{eqnarray*}
F(r)
&:=&
\psi(x,t)^*\beta\psi(x,t)
\\
&=&
(\abs{a_0}^2-\abs{b_0}^2)\big(v_0(r)^2-u_0(r)^2\big)
+
(\abs{a_1}^2-\abs{b_1}^2)\big(v_1(r)^2-u_1(r)^2\big).
\end{eqnarray*}
In view of Assumption~\ref{ass-mono},
the function $F(r)$
is positive, differentiable, and strictly monotonically decreasing
to zero as $r\to\infty$
as long as
$a_0,\,a_1,\,b_0,\,b_1\in\C$ are
chosen so that $\abs{a_0}^2>\abs{b_0}^2$
and so that $\abs{\abs{a_1}^2-\abs{b_1}^2}$ is sufficiently small.
Therefore,
there is a differentiable monotonically increasing function $f$
with $f(0)=0$
such that
$f(F(r))=V(r)$.
Then the four-frequency solitary wave
$\psi(x,t)$ from \eqref{def-psi-a-b}
satisfies the nonlinear Dirac equation \eqref{nld}.

\begin{remark}
We do not know whether
the potential $W$ in \eqref{one-and-a-half}
could be chosen so that
the function $f$
obtained in the above construction
is polynomial or algebraic.
\end{remark}

\bibliographystyle{sima-doi-sc}
\bibliography{bibcomech}
\end{document}